\documentclass[pdflatex,sn-mathphys-num]{sn-jnl}

\usepackage{graphicx}%
\usepackage{multirow}%
\usepackage{amsmath,amssymb,amsfonts}%
\usepackage{amsthm}%
\usepackage{mathrsfs}%
\usepackage[title]{appendix}%
\usepackage{xcolor}%
\usepackage{textcomp}%
\usepackage{manyfoot}%
\usepackage{booktabs}%
\usepackage{algorithm}%
\usepackage{algorithmicx}%
\usepackage{algpseudocode}%
\usepackage{listings}%
\usepackage{subcaption}%

\usepackage{accents}%

\theoremstyle{thmstyleone}%
\newtheorem{theorem}{Theorem}
\newtheorem{proposition}[theorem]{Proposition}%

\theoremstyle{thmstyletwo}%
\newtheorem{assumption}{Assumption}%

\theoremstyle{thmstylethree}%

\begin{document}

\title{An Efficient Spatial Branch-and-Bound Algorithm for Global Optimization of Gaussian Process Posterior Mean Functions}


\author[1,2]{\fnm{Wei-Ting} \sur{Tang}}

\author[2]{\fnm{Akshay} \sur{Kudva}}

\author[3]{\fnm{Calvin} \sur{Tsay}}

\author*[1,2]{\fnm{Joel A.} \sur{Paulson}}\email{joel.paulson@wisc.edu}

\affil*[1]{\orgdiv{Department of Chemical and Biological Engineering}, \orgname{University of Wisconsin-Madison}, \orgaddress{\street{1415 Engineering Drive}, \city{Madison}, \postcode{53706}, \state{Wisconsin}, \country{USA}}}

\affil*[2]{\orgdiv{Department of Chemical and Biomolecular Engineering}, \orgname{The Ohio State University}, \orgaddress{\street{151 W. Woodruf Avenue}, \city{Columbus}, \postcode{43210}, \state{Ohio}, \country{USA}}}

\affil[3]{\orgdiv{Department of Computing}, \orgname{Imperial College London}, \orgaddress{\street{Exhibition Rd}, \city{South Kensington}, \postcode{SW7 2AZ}, \state{London}, \country{United Kingdom}}}


\abstract{We study the deterministic global optimization of trained Gaussian process posterior mean functions over hyperrectangular domains. Although the posterior mean function has a compact closed-form representation, its global optimization is challenging because it remains nonlinear and nonconvex. Existing exact deterministic approaches become increasingly difficult to scale as the number of training data points grows, leading to approximation-based methods that improve tractability by optimizing a modified (inexact) objective. In this work, we propose PALM-Mean, a piecewise-analytic lower-bounding framework embedded in reduced-space spatial branch-and-bound. At each node, kernel terms that are locally important are replaced by a sign-aware piecewise-linear relaxation in an appropriate scalar distance variable, while the remaining terms are bounded analytically in closed form. We show this hybrid approach yields a valid lower bound for the posterior mean, while limiting the size of the branch-and-bound subproblems. We establish validity of the node lower bounds and $\varepsilon$-global convergence of the resulting algorithm. Computational results on synthetic benchmarks and real-world application problems show that PALM-Mean improves scalability relative to representative general-purpose deterministic global solvers, particularly as the number of training data points increases.}

\keywords{Gaussian processes, Deterministic global optimization, Spatial branch-and-bound, Surrogate optimization}



\maketitle

\section{Introduction}
\label{sec:intro}

Trained machine learning (ML) surrogates are increasingly used not only to predict system responses, but also as algebraic components inside optimization models~\cite{misener2023formulating}. In many scientific and engineering applications, the expensive step is generating data from simulations or experiments, whereas the downstream task is to repeatedly optimize a learned predictor within design, calibration, control, or experimental-planning workflows. This viewpoint fits naturally within the broader literature on computer experiments and surrogate-based analysis and optimization, where the central objective is to replace an expensive model by a fast emulator that can then be analyzed and optimized directly \cite{Sacks1989DACE,SantnerWilliamsNotz2003,SantnerWilliamsNotz2019,Queipo2005Surrogate,ForresterKeane2009}.

Gaussian processes (GPs), which are closely related to Kriging models in the computer-experiments literature \cite{kleijnen2009kriging}, remain especially important in this setting. Their main appeal is that they provide a flexible non-parametric regression model together with a principled quantification of predictive uncertainty, which is one reason they continue to play a central role in sample-efficient design and Bayesian optimization (BO) \cite{ref:gp_book,ref:bo_tutorial,ref:ego,ref:bo_review_a,ref:bo_review_b,paulson2025bayesian}. At the same time, once a GP has been trained, one is often left with a surrogate model that can be embedded in decision-making problems~\cite{ref:gp_embedded_exact}. Specifically, we consider the deterministic optimization problem over a fixed surrogate model rather than a sequential learning problem, which is an important distinction in this work. 

A trained GP with inputs $X=[x_1,\ldots,x_N]^\top$, observations $y \in \mathbb{R}^N$, and a stationary kernel $k_L(\cdot,\cdot)$ with automatic relevance determination (ARD) (diagonal) lengthscale matrix $L$ defines a posterior distribution over functions. In this setting, the posterior mean may be written as
\begin{equation}
\mu(x)=\sum_{i=1}^N \alpha_i k_L(x,x_i), \qquad 
\alpha = \left(K_{XX}+\sigma_\epsilon^2 I\right)^{-1} y.
\label{eq:gp_mean_sum}
\end{equation}

Although simple in appearance, global optimization over this mean function remains challenging due to its nonlinear, multi-modal nature. 
In this paper, we study the deterministic global minimization of $\mu(x)$ over a compact hyperrectangular domain. This problem arises, for example, when a GP-based workflow ultimately selects a recommendation by minimizing the trained posterior mean, but it also appears more broadly whenever a trained GP mean is embedded as a surrogate objective inside a larger optimization model \cite{Sacks1989DACE,SantnerWilliamsNotz2003,SantnerWilliamsNotz2019,Queipo2005Surrogate,ForresterKeane2009,Jones2001Taxonomy}. Although $\mu(x)$ is available in closed form, it is generally a non-convex weighted sum of kernel evaluations centered at all training points. Mixed signs of the entries in $\alpha$ and the direct dependence on every training point make global optimization nontrivial, especially as $N$ increases.

From the standpoint of deterministic global optimization, one straightforward route is to embed the GP equations in a full-space nonlinear or mixed-integer nonlinear optimization model and pass that formulation to a general-purpose global solver such as BARON \cite{ref:baron} or ANTIGONE \cite{ref:antigone}. That strategy is generic and useful, but it does not necessarily leverage the structure of the posterior mean. More broadly, deterministic global optimization has long relied on methods such as spatial branch-and-bound \cite{falk1969algorithm}, reduced-space formulations, relaxation-based bounding, and bound-tightening ideas to obtain certified global solutions of non-convex problems \cite{HorstTuy1996Global,RyooSahinidis1995Global,BelottiEtAl2009BT,BelottiEtAl2013MINLP}. These ideas provide the right algorithmic language for the present problem as well.

Reduced-space viewpoints are especially attractive in this setting because branching is only performed in the original decision variables, while lower bounds are propagated through the surrogate expression. For trained GPs, the most relevant exact benchmark is the reduced-space deterministic framework of \cite{ref:gp_embedded_exact}, which derives custom McCormick-based relaxations \cite{ref:reduced_space_bb,ref:deterministic_global_opt_text} directly from the explicit GP equations. As stated previously, this line of work targets the exact posterior mean, but it was found to scale poorly with number of training points, as the node lower-bounding problem must still account for one kernel contribution per data point. A different line of work improves tractability by instead approximating the kernel function in an optimization-friendly way, and then solving the resulting mixed-integer model \cite{ref:pwl_kernel_aistats}. That direction is closely related to a broader literature on disjunctive and piecewise-linear mixed-integer formulations \cite{VielmaNemhauser2011Disjunctive,HuchetteVielma2023Nonconvex,GrimstadKnudsen2020PWP}, but from the standpoint of posterior mean minimization it changes the problem being solved (i.e., the optimizer is globally optimal for the approximation, not necessarily for the original posterior mean). 

This leaves a natural gap in the literature. Existing exact deterministic approaches preserve fidelity to the trained posterior mean, but they can become increasingly difficult to scale as the problem size grows. Approximation-based methods can improve tractability, but they do so by replacing the objective. In this paper, we address this methodological gap for posterior mean minimization. We introduce PALM-Mean, a piecewise-analytic lower-bounding framework embedded in reduced-space spatial branch-and-bound. At each node, we define kernel terms that are `locally important' and treat them with a novel sign-aware piecewise-linear relaxation in an appropriate scalar distance variable, while the remaining terms are bounded analytically in closed form. Our goal is to preserve deterministic validity for the exact posterior mean while allocating computational effort selectively rather than uniformly across all kernel terms. 
More generally, this work contributes to the emerging literature on developing tailored optimization formulations for trained machine learning surrogates~\cite{misener2023formulating,ref:mlopt_review}, where most advances have focused on neural network models~\cite{anderson2020strong,grimstad2019relu,huchette2026deep,tsay2021partition}, leaving GP surrogates comparatively understudied despite their above-mentioned advantages.

The main contributions of this paper are threefold. First, we derive a valid hybrid lower-bounding procedure for GP posterior mean minimization that combines tighter piecewise relaxations on selected kernel terms with inexpensive analytical bounds on the remaining terms. Second, we introduce a specialized spatial branch-and-bound algorithm based on this construction for the exact global solution of the original posterior mean problem. Third, we provide a comprehensive computational study that clarifies when and why the proposed method improves scalability relative to alternative exact reduced-space approaches. The remainder of the paper is organized as follows. Section~\ref{sec:prob-statement} introduces the problem and notation. Section~\ref{sec:methodology} presents the hybrid lower-bounding strategy and associated branch-and-bound method. Section~\ref{sec:numerical-experiments} reports computational results and Section~\ref{sec:conclusions} provides some concluding remarks.

\section{Problem statement and preliminaries}
\label{sec:prob-statement}

\subsection{Gaussian process posterior mean}

Consider a Gaussian process regression model~\cite{ref:gp_book} trained on the dataset $\mathcal{D}=\{X,y\}$, where the training inputs are
\[
X=
\begin{bmatrix}
x_1^\top\\
\vdots\\
x_N^\top
\end{bmatrix}
\in\mathbb{R}^{N\times D},
\qquad
x_i\in\mathbb{R}^D,
\]
and the corresponding observations are $y=[y_1,\ldots,y_N]^\top\in\mathbb{R}^N$. We assume the standard observation model with Gaussian measurement noise
\[
y_i=f(x_i)+\varepsilon_i,
\qquad
\varepsilon_i\sim\mathcal{N}(0,\sigma_\epsilon^2),
\qquad i=1,\ldots,N,
\]
together with a zero prior mean after centering the response data. Under these assumptions, the posterior mean may be written in the compact form \eqref{eq:gp_mean_sum} (which is a literature-standard assumption \cite{bradford2018efficient, paulson2022cobalt}). This representation is central to our following derivations, since it explicitly shows that the trained posterior mean is a weighted sum of $N$ individual kernel term -- one for each training point -- with coefficients $\alpha_i\in\mathbb{R}$ that may have mixed signs (meaning they can be either positive or negative).

\subsection{Optimization problem}

For clarity, we consider the deterministic global minimization of the trained posterior mean over a compact domain:
\begin{equation}
\min_{x\in\mathcal{X}} \ \mu(x).
\label{eq:mean_problem}
\end{equation}
Nevertheless, our derivations are general, and the proposed methodologies can be applied to more general optimization problems involving $\mu(x)$. 
The standing assumptions used throughout the paper are summarized next.

\begin{assumption}
\label{assump:standing}
The following conditions hold throughout this paper.
\begin{enumerate}
    \item The response data have been centered such that the GP prior mean is zero and the posterior mean is given by \eqref{eq:gp_mean_sum}.
    \item The covariance kernel is a stationary function of distance and monotonically nonincreasing with respect to scaled distance.
    \item The feasible region is a hyperrectangle
    \[
    \mathcal{X}=[x^L,x^U]:=\{x\in\mathbb{R}^D: x^L\le x\le x^U\},
    \]
    where $x^L,x^U\in\mathbb{R}^D$ and $x^L<x^U$ componentwise.
    \item The training inputs $X$, observations $y$, noise variance $\sigma_\epsilon^2$, and kernel hyperparameters are fixed during optimization (i.e., the surrogate model is trained).
\end{enumerate}
\end{assumption}

These assumptions isolate the global optimization problem for a trained GP from the separate issues of model fitting and hyperparameter estimation \cite{ref:gp_book,ref:gp_regression_review}. In particular, we do not optimize over kernel hyperparameters or marginal log-likelihood expressions. The object of interest is the trained posterior mean itself.

Under Assumption~\ref{assump:standing}, the function $\mu(\cdot)$ is continuous on the compact set $\mathcal{X}$, so problem \eqref{eq:mean_problem} admits at least one global minimizer. For the spatial branch-and-bound framework developed later, we use $X_h\subseteq \mathcal{X}$ to denote a generic node in the partition tree. Each node $X_h$ is again a hyperrectangle, and the main algorithmic task is to compute a valid lower bound on $\min_{x\in X_h}\mu(x)$.

\subsection{Kernel classes and ARD notation}

We focus on common stationary kernels with automatic relevance determination (ARD) \cite{ref:gp_book}. We denote the diagonal matrix of lengthscales
\[
L=\mathrm{diag}(\ell_1,\ldots,\ell_D),
\qquad
\ell_j>0 \ \ \forall j=1,\ldots,D,
\]
and define the scaled distance from $x$ to training point $x_i$ by
\begin{equation}
r_i(x):=\|L^{-1}(x-x_i)\|_2,
\qquad
d_i(x):=r_i(x)^2.
\label{eq:scaled_distance}
\end{equation}
The stationary kernels considered in this paper depend only on this scaled separation. We specifically consider the squared exponential kernel, also referred to as the radial basis function (RBF) kernel, together with the Mat\'ern-$\nu$ family for $\nu\in\{1/2,3/2,5/2\}$, noting their broad popularity in modeling applications~\cite{ref:gp_book}:
\[
k_{\mathrm{RBF}}(x,x_i)=\sigma_f^2\exp\!\left(-\frac{1}{2}d_i(x)\right),
\]
\[
k_{1/2}(x,x_i)=\sigma_f^2 e^{-r_i(x)},
\]
\[
k_{3/2}(x,x_i)=\sigma_f^2\left(1+\sqrt{3}\,r_i(x)\right)e^{-\sqrt{3}\,r_i(x)},
\]
\[
k_{5/2}(x,x_i)=\sigma_f^2\left(1+\sqrt{5}\,r_i(x)+\frac{5}{3}r_i(x)^2\right)e^{-\sqrt{5}\,r_i(x)},
\]
where $\sigma_f^2 > 0$ denotes the scale factor that controls the overall amplitude (i.e., the vertical spread) of functions in the posterior distribution. For the RBF kernel, the natural scalar argument is the squared scaled distance $d_i(x) = r_i(x)^2$, whereas for the Mat\'ern kernels it is the scaled distance $r_i(x)$. This distinction matters for lower-bounding constructions, even though the posterior mean always retains the form \eqref{eq:gp_mean_sum}.

\subsection{Relevant prior work}

Problem \eqref{eq:mean_problem} is a smooth but generally non-convex nonlinear optimization problem. Although one may always write the GP equations in a full-space formulation and call a general-purpose deterministic global solver, such as BARON \cite{ref:baron} or ANTIGONE \cite{ref:antigone}, that approach does not automatically exploit the structure of the posterior mean and may introduce many auxiliary variables tied to the kernel evaluations. This issue becomes more pronounced as the number of training points increases.

For that reason, ``reduced-space formulations'' are especially attractive in this setting. In reduced space, branching is performed directly in the original decision variables, while lower bounds are propagated through the predictor expression itself \cite{ref:reduced_space_bb,ref:deterministic_global_opt_text}. The most relevant deterministic benchmark here is the reduced-space approach of Schweidtmann et al., who propose a spatial branch-and-bound framework for trained GP models using McCormick-based relaxations of the explicit GP equations \cite{ref:gp_embedded_exact}. Their method targets the exact GP model rather than an approximation. It also draws on the broader machinery of factorable programming and McCormick relaxations, which remain central tools in deterministic global optimization \cite{ref:mccormick,ref:mccormick_algorithms,ref:global_opt_text_a,ref:global_opt_text_b}.

At the same time, the structure of \eqref{eq:gp_mean_sum} creates a clear computational difficulty. Even if one can derive strong relaxations for each individual term $\alpha_i k(x,x_i)$ over a node $X_h$, the lower bound for the posterior mean must still aggregate all $N$ contributions. As $N$ grows, the relaxation quality can deteriorate because the node bound is formed from a composition over many kernel terms with coefficients that may have different signs. In practice, this can lead to substantial cancellation and weaker lower bounds, which in turn increases the amount of partitioning required.

A different line of work improves tractability by replacing the exact kernel profile with a piecewise-linear approximation, yielding mixed-integer formulations that can be computationally effective in practice \cite{ref:pwl_kernel_aistats}. This direction is promising, but it inherently changes the problem being solved. The resulting optimizer is globally optimal for the approximating model, not necessarily for the original posterior mean. Though the authors provide some suboptimality bounds for the BO context, this leaves an opening for methods that preserve deterministic validity for the exact objective while scaling more effectively than existing approaches as the posterior sum grows (which is exactly what we pursue in this paper).

A related line of work focuses on the global optimization of common acquisition functions in BO, which are themselves constructed from GP posteriors, e.g., Thompson sampling (TS) and lower confidence bound (LCB). \citet{adebiyi2024optimizing} propose an initial sampling strategy based on root-finding to improve solution quality of gradient-based local solvers when optimizing the TS.  \citet{georgiou2025deterministic} employ the deterministic global solver MAiNGO \cite{ref:maingo} within the inner acquisition (LCB) optimization loop of BO. The authors analyze the effect of local/global solutions of LCB on BO's performance. Different from these prior works, our paper focuses on the global minimization of GP posterior mean function in the surrogate modeling context.

\section{Methodology}
\label{sec:methodology}

We now introduce PALM-Mean, a piecewise-analytic lower-bounding method and a corresponding spatial branch-and-bound framework for the exact global solution of \eqref{eq:mean_problem}. The key idea is to construct, on each branch-and-bound node $X_h$, a valid lower-bounding problem that is substantially tighter than a purely analytical interval bound, while avoiding the cost of piecewise treatment for every kernel term. The main text below in this section presents the core construction, while more explicit/detailed formulas and pseudocode are provided in the appendix. 

\subsection{Spatial branch-and-bound overview}

Let $X_h\subseteq\mathcal{X}$ denote a subdomain corresponding to a node in the branch-and-bound tree, with $X_h=[x_h^L,x_h^U]$. A valid lower bound on this node is any scalar $\mathrm{LB}(X_h)$ satisfying
\[
\mathrm{LB}(X_h)\le \min_{x\in X_h}\mu(x).
\]
If $\mathrm{UB}^\star$ denotes the best objective value currently available from feasible points, then node $X_h$ can be discarded, or `fathomed,' whenever $\mathrm{LB}(X_h)\ge \mathrm{UB}^\star-\varepsilon$ for the chosen optimality tolerance $\varepsilon>0$. Thus, the lower-bounding problem must do two things well. It must be \emph{valid}, so that no globally optimal point is incorrectly pruned, and it must be sufficiently \emph{tight and inexpensive} so that the branch-and-bound procedure does not become computationally intractable.

PALM-Mean works in reduced space, meaning branching is performed only in the original decision vector $x$, while auxiliary variables are introduced only as needed inside the node lower-bounding problem. On each branch-and-bound node with subdomain $X_h$, the algorithm solves a lower-bounding problem, updates the incumbent by local optimization of the exact posterior mean, and then branches on one coordinate of $X_h$ if the node is not fathomed. Since the upper-bounding step uses the exact objective and the lower-bounding step is valid by construction, convergence to a global optimum follows from standard spatial branch-and-bound arguments \cite{ref:reduced_space_bb,ref:deterministic_global_opt_text}.

\subsection{Sign-aware bounds for individual kernel terms}

The posterior mean contains coefficients of mixed sign, so a valid lower bound cannot be constructed from kernel lower bounds alone. 
In other words, the uncertainty of the sign associated with each kernel-function term dictates that both lower and upper bounds be considered. 
This is the basic reason that sign-aware envelopes are required. For convenience, define
\[
\alpha_i^+ := \max\{0,\alpha_i\},
\qquad
\alpha_i^- := \max\{0,-\alpha_i\},
\qquad i=1,\ldots,N,
\]
so that $\alpha_i=\alpha_i^+-\alpha_i^-$. We also define the shorthand $k_i(x):=k(x,x_i)$. We can then introduce the following proposition to create upper and lower estimators of $\mu(x)$. 

\begin{proposition}
\label{prop:sign_aware_term_bounds}
Suppose that, for some node subdomain $X_h$, the functions $\underline{k}_{i,h}$ and $\overline{k}_{i,h}$ satisfy
\[
\underline{k}_{i,h}(x)\le k_i(x)\le \overline{k}_{i,h}(x),
\qquad \forall x\in X_h.
\]
Then
\[
\underline{t}_{i,h}(x):=\alpha_i^+\underline{k}_{i,h}(x)-\alpha_i^-\overline{k}_{i,h}(x)
\]
and
\[
\overline{t}_{i,h}(x):=\alpha_i^+\overline{k}_{i,h}(x)-\alpha_i^-\underline{k}_{i,h}(x)
\]
satisfy
\[
\underline{t}_{i,h}(x)\le \alpha_i k_i(x)\le \overline{t}_{i,h}(x),
\qquad \forall x\in X_h.
\]
Consequently, $\sum_{i=1}^N \underline{t}_{i,h}(x)$ is a valid pointwise lower bound of $\mu(x)$ on $X_h$.
\end{proposition}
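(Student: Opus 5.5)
The plan is a direct pointwise argument. We fix a node $X_h$, an index $i$, and a point $x\in X_h$, and we use the decomposition $\alpha_i=\alpha_i^+-\alpha_i^-$. By definition $\alpha_i^+,\alpha_i^-\ge 0$, and at most one of them is nonzero. The key step is that multiplying an inequality by a nonnegative scalar preserves its direction, while multiplying by a nonpositive scalar reverses it. From the hypothesis $\underline{k}_{i,h}(x)\le k_i(x)\le\overline{k}_{i,h}(x)$ we therefore get
\[
\alpha_i^+\underline{k}_{i,h}(x)\le \alpha_i^+k_i(x)\le \alpha_i^+\overline{k}_{i,h}(x),
\qquad
-\alpha_i^-\overline{k}_{i,h}(x)\le -\alpha_i^-k_i(x)\le -\alpha_i^-\underline{k}_{i,h}(x).
\]

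Adding these two chains termwise gives
\[
\underline{t}_{i,h}(x)\le (\alpha_i^+-\alpha_i^-)k_i(x)=\alpha_i k_i(x)\le \overline{t}_{i,h}(x),
\]
which is the claimed two-sided bound for each term. One could instead split into the cases $\alpha_i\ge 0$, where $\alpha_i^-=0$ and $\underline{t}_{i,h}=\alpha_i\underline{k}_{i,h}$, and $\alpha_i<0$, where $\alpha_i^+=0$ and $\underline{t}_{i,h}=\alpha_i\overline{k}_{i,h}$. The additive argument avoids the case split, so I would present that one.

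For the consequence, we sum the lower inequalities over $i=1,\dots,N$ at the same point $x$. The estimators $\underline{k}_{i,h},\overline{k}_{i,h}$ are assumed valid for every $i$ on the common node $X_h$. Since \eqref{eq:gp_mean_sum} gives $\mu(x)=\sum_i\alpha_i k_i(x)$ (Assumption~\ref{assump:standing}(1)), we obtain $\sum_i\underline{t}_{i,h}(x)\le\mu(x)$ for all $x\in X_h$. As a remark, minimizing over $X_h$ then shows that $\min_{x\in X_h}\sum_i\underline{t}_{i,h}(x)$ (or any lower bound on it) is a valid $\mathrm{LB}(X_h)$.

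There is no real obstacle here. The only point that needs care is pairing the negative part $\alpha_i^-$ with the \emph{upper} kernel estimator in $\underline{t}_{i,h}$, which is exactly the sign-reversal step above. No property of the kernel beyond the assumed pointwise sandwich on $X_h$ is needed, so the argument is independent of the kernel class and of how the estimators are built.
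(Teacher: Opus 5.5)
Your proof is correct and essentially matches the paper's argument. The paper splits into the cases $\alpha_i\ge 0$ and $\alpha_i<0$ and recombines via $\alpha_i=\alpha_i^+-\alpha_i^-$, whereas you scale the sandwich by $\alpha_i^+\ge 0$ and by $-\alpha_i^-\le 0$ and add the two chains, which is the same sign-reversal step without the case split; your final summation over $i$ is identical to the paper's.
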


\begin{proof}
Take arbitrary index $i$ and point $x\in X_h$. If $\alpha_i\ge0$, multiplying the kernel bounds by $\alpha_i$ preserves the inequality direction and yields
\[
\alpha_i\underline{k}_{i,h}(x)\le \alpha_i k_i(x)\le \alpha_i\overline{k}_{i,h}(x).
\]
If $\alpha_i<0$, multiplying by $\alpha_i$ reverses the inequality direction and yields
\[
\alpha_i\overline{k}_{i,h}(x)\le \alpha_i k_i(x)\le \alpha_i\underline{k}_{i,h}(x).
\]
Writing $\alpha_i=\alpha_i^+-\alpha_i^-$ combines these two cases into the sign-aware expressions stated in the proposition. Since the argument is valid for each $i$ separately, summing over $i=1,\ldots,N$ gives a pointwise lower bound of the full posterior mean over domain $X_h$.
\end{proof}

Proposition~\ref{prop:sign_aware_term_bounds} reduces the lower-bounding task over a node subdomain to the construction of valid lower and upper bounds for each individual kernel term on $X_h$. We consider two complementary constructions below. The first is an $x$-dependent piecewise-linear envelope that is tighter but more expensive. The second is a closed-form analytical box bound that is much cheaper but typically weaker.

For the RBF kernel, it is natural to work in the scalar variable $d_i(x)$ from \eqref{eq:scaled_distance}, since the kernel profile $\kappa_{\mathrm{RBF}}(d)=\sigma_f^2 e^{-d/2}$ is decreasing and convex on $[0,\infty)$. Tangent lines are therefore valid lower bounds, and secant lines are valid upper bounds on any interval in $d$. For the Mat\'ern-$1/2$ kernel, the same rule holds in the scalar variable $r_i(x)$. For the Mat\'ern-$3/2$ and Mat\'ern-$5/2$ kernels, the scalar profiles remain decreasing, but each has a single inflection point. As a consequence, the tangent/secant roles as lower and upper bounds depend on whether a segment lies in a concave or convex region. Exact formulas, curvature thresholds, and segmentwise envelope definitions are given in Appendix~\ref{app:tangent_secant}.

Figure~\ref{fig:kernel_relaxation} illustrates why this distance-based piecewise construction is attractive. In the scalar distance variable, the piecewise-linear relaxation tracks the RBF kernel closely for both positive and negative term contributions. Although the McCormick relaxation of a kernel with positive contribution is exact on the original function in the scalar distance variable, the McCormick relaxation of a kernel with negative contribution in the scalar distance variable can be much weaker than the piecewise-linear relaxation. Furthermore, the McCormick relaxation builds the bound of the kernel function in the original space $x$ through the propagation of McCormick relaxations through factorable functions \cite{ref:mccormick}. As shown in the bottom row, the final bound of McCormick relaxation in the original variable space $x$ is much weaker than the piecewise linear relaxation. This looseness matters because lower-bound quality at the level of individual kernel terms propagates directly into the node lower bound for the full posterior mean. Furthermore, given the nonlinearity of the McCormick-based bounds, a deterministic solver employing them (e.g. MAiNGO \cite{ref:maingo}) typically performs linearization at the midpoint to formulate the lower bounding problem as a linear program \cite{bongartz2017deterministic}, which can further enlarge the gap between the lower bound and the true kernel function.


\begin{figure}[t]
     \centering
     \begin{subfigure}[b]{0.9\textwidth}
         \centering
         \includegraphics[width=\textwidth]{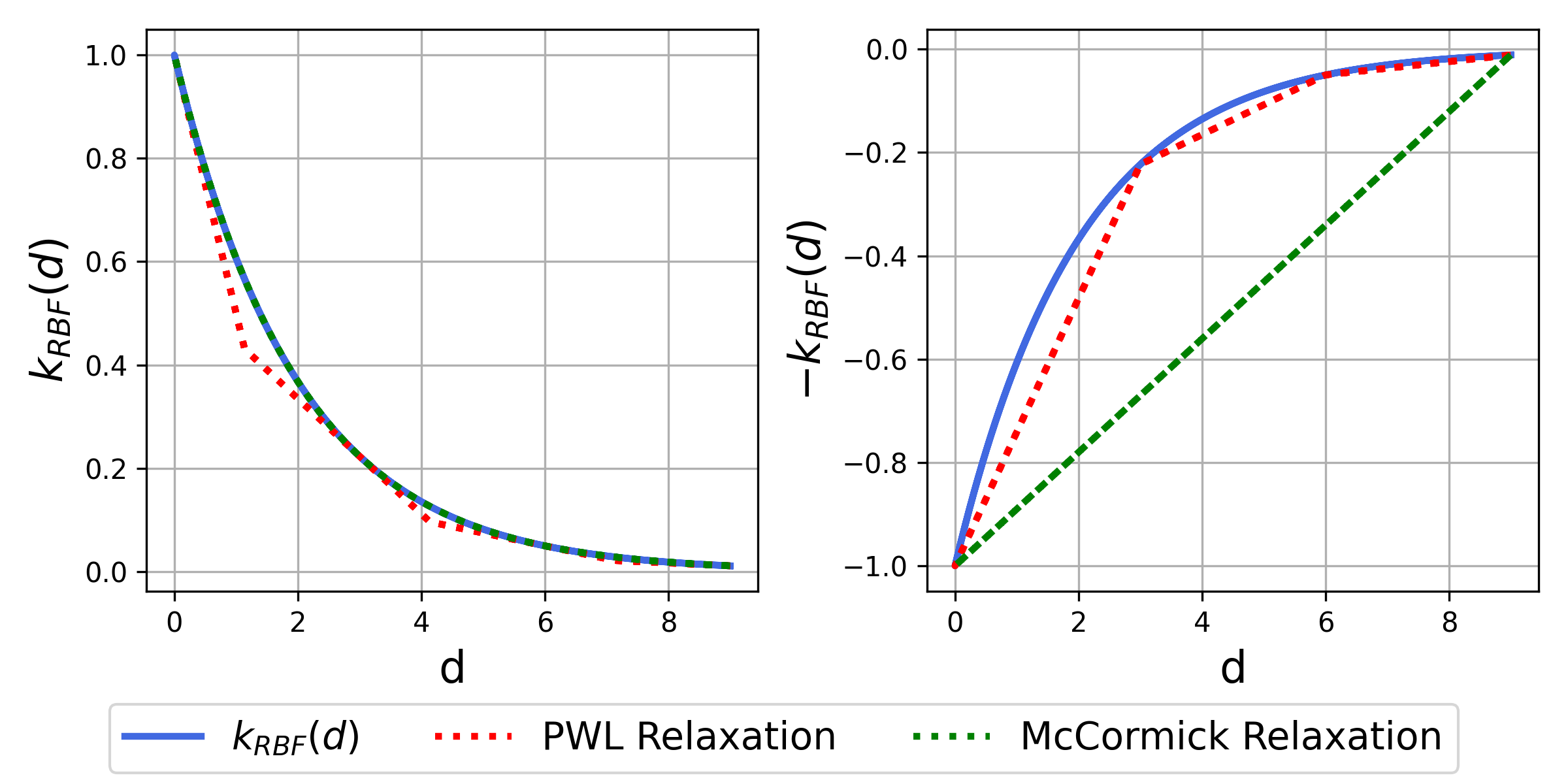}
     \end{subfigure}
     
     \begin{subfigure}[b]{0.9\textwidth}
         \centering
         \includegraphics[width=\textwidth]{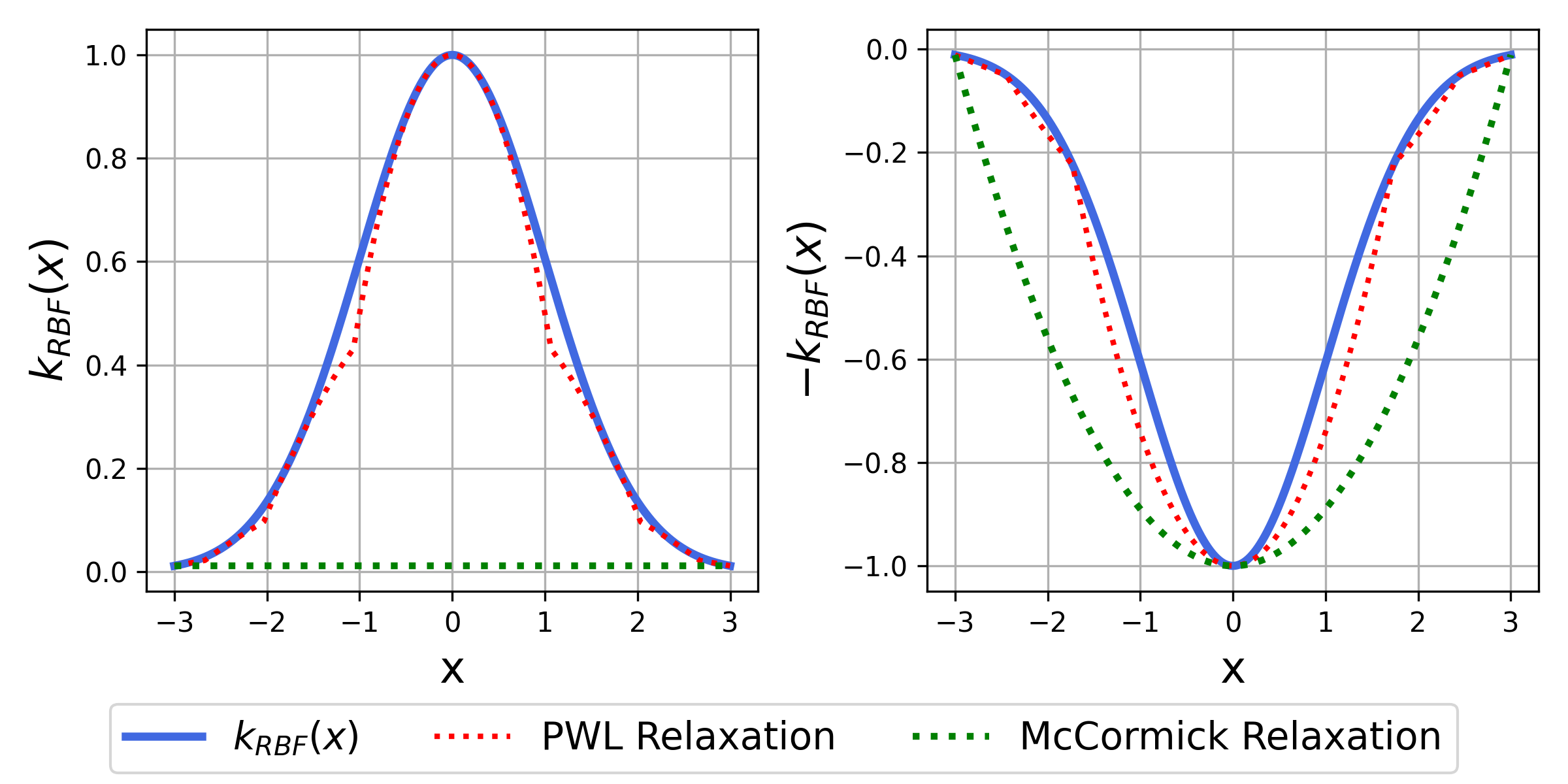}
     \end{subfigure}
     \caption{Illustration of piecewise-linear and McCormick relaxation for the RBF kernel. The top row shows sign-aware envelopes in scalar distance variable $d$. The McCormick relaxation overlaps with the original kernel function with positive contribution, which is the tightest possible convex relaxation. The distance-based piecewise relaxation is substantially tighter than McCormick relaxation for kernel with negative contribution. The bottom row compares the resulting lower bounds in the original variable space, where the piecewise relaxation is much tighter than McCormick relaxation in both positive and negative contribution cases.}
\label{fig:kernel_relaxation}
\end{figure}

As mentioned above, as a cheaper alternative to these piecewise-linear bounds, we also introduce a closed-form analytical bound over a hyperrectangle. Since all kernels considered here are stationary and monotonically decreasing in scaled distance, analytical kernel bounds reduce to finding the minimum and maximum scaled distance from $x_i$ to the node box $X_h$. These distances are available in closed form from the coordinate-wise projection of $x_i$ onto $X_h$ and from the furthest vertex of $X_h$. The resulting analytical bounds are detailed in Appendix~\ref{app:analytic_bounds}. Applying Proposition~\ref{prop:sign_aware_term_bounds} then yields inexpensive constant lower and upper bounds for each kernel contribution on $X_h$, which can immediately be composed to bounds over $\mu(x)$.

\subsection{Important and unimportant terms}
\label{sec:important_terms}
The key observation behind PALM-Mean is that, for a given node subdomain $X_h$, only a subset of kernel terms typically has enough potential influence to justify the tighter, but more expensive piecewise-linear bounding treatment. The remaining terms can be bounded analytically at much lower cost.

For each training point $x_i$, define the nodewise contribution score as follows
\begin{equation}
\beta_{i,h}:=\max_{x\in X_h} |\alpha_i k_i(x)|.
\label{eq:beta_def}
\end{equation}
Since the kernels are non-negative and monotonically decreasing in scaled distance, the analytical upper bounds derived in Appendix~\ref{app:analytic_bounds} imply that $\beta_{i,h}=|\alpha_i|\,\overline{k}_{i,h}^{A}$.
Here, $\underline{k}_{i,h}^{A}$ and $\overline{k}_{i,h}^{A}$ denote the analytical lower and upper bounds, respectively, on the kernel term $k_i(x)$ over node $X_h$, while
\[
\underline{t}_{i,h}^{A}
:=
\alpha_i^+ \underline{k}_{i,h}^{A}
-
\alpha_i^- \overline{k}_{i,h}^{A},
\qquad
\overline{t}_{i,h}^{A}
:=
\alpha_i^+ \overline{k}_{i,h}^{A}
-
\alpha_i^- \underline{k}_{i,h}^{A}
\]
denote the corresponding analytical lower and upper bounds on the signed contribution $\alpha_i k_i(x)$. Thus, $\beta_{i,h}$ is available in closed form. Let
$\rho\in[0,1]$ be a specified heuristic importance threshold. We define the important and unimportant index sets on node $X_h$ by
\[
\mathcal{I}(h):=\{i:\beta_{i,h}\ge \rho\},
\qquad
\mathcal{U}(h):=\{1,\ldots,N\}\setminus \mathcal{I}(h).
\]
When $\rho=0$, every term is treated as important and the method reduces to a fully piecewise-linear node formulation, i.e., constructing piecewise-linear bounding functions for every kernel term. As $\rho$ increases, fewer terms receive the full piecewise-linear treatment. 
In other words, the choice of $\rho$ balances the tradeoff between overall bound tightness and computational expense. In this work. we set $\rho=0.01$.

Our rationale is straightforward: if $\beta_{i,h}$ is small, then even the largest possible magnitude of term $\alpha_i k_i(x)$ on $X_h$ is small relative to the dominant terms within that node subdomain. In such cases, formulating piecewise-linear bounding functions (e.g., using binary variables or special ordered sets) for term $i$ often yields limited practical benefit, which we summarize in the next proposition. 

\begin{proposition}
\label{prop:analytic_gap_beta}
For each index $i \in \{1,..,D\}$ and subdomain $X_h$, the analytically derived bounds satisfy
\[
\overline{t}_{i,h}^{A}-\underline{t}_{i,h}^{A}
=
|\alpha_i|\left(\overline{k}_{i,h}^{A}-\underline{k}_{i,h}^{A}\right)
\le \beta_{i,h}.
\]
Consequently, if $i\in\mathcal{U}(h)$, then
\[
\overline{t}_{i,h}^{A}-\underline{t}_{i,h}^{A}
\le \rho.
\]
\end{proposition}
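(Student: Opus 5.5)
The proof is a direct computation from the definitions. I would proceed in three steps: compute the gap algebraically, bound it using nonnegativity of the analytical kernel lower bound, and then invoke the definition of $\mathcal{U}(h)$.

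\emph{Step 1 (the equality).} Subtracting the definitions of $\underline{t}_{i,h}^{A}$ and $\overline{t}_{i,h}^{A}$ gives
\[
\overline{t}_{i,h}^{A}-\underline{t}_{i,h}^{A}
=\alpha_i^+\bigl(\overline{k}_{i,h}^{A}-\underline{k}_{i,h}^{A}\bigr)+\alpha_i^-\bigl(\overline{k}_{i,h}^{A}-\underline{k}_{i,h}^{A}\bigr).
\]
Since $\alpha_i^+ + \alpha_i^- = \max\{0,\alpha_i\}+\max\{0,-\alpha_i\} = |\alpha_i|$, this equals $|\alpha_i|\bigl(\overline{k}_{i,h}^{A}-\underline{k}_{i,h}^{A}\bigr)$. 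No case split on the sign of $\alpha_i$ is needed, because exactly one of $\alpha_i^\pm$ is nonzero and their sum absorbs both cases.

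\emph{Step 2 (the inequality).} I would use the identity $\beta_{i,h}=|\alpha_i|\,\overline{k}_{i,h}^{A}$ stated just before the proposition. With it, the claim reduces to
\[
|\alpha_i|\bigl(\overline{k}_{i,h}^{A}-\underline{k}_{i,h}^{A}\bigr)\le |\alpha_i|\,\overline{k}_{i,h}^{A},
\]
which is equivalent to $|\alpha_i|\,\underline{k}_{i,h}^{A}\ge 0$. This holds because $\underline{k}_{i,h}^{A}\ge 0$: the analytical lower bound is the kernel profile evaluated at the maximal scaled distance from $x_i$ to the box $X_h$ (the furthest vertex). Every kernel listed (RBF, Mat\'ern-$1/2,3/2,5/2$) is $\sigma_f^2$ times a product of positive factors, so it is strictly positive at every finite distance.

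This nonnegativity is the only point with real content, and I expect it to be the main obstacle. It relies on the analytical bounds of Appendix~\ref{app:analytic_bounds} being exact extremal values of the profile over $X_h$, not some looser surrogate that might go negative. I would cite the appendix construction directly: monotonicity in scaled distance together with compactness of $X_h$ means the minimum and maximum of $k_i$ over $X_h$ are attained at the farthest vertex and at the projection of $x_i$, respectively. That also justifies $\beta_{i,h}=|\alpha_i|\overline{k}_{i,h}^{A}$.

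\emph{Step 3 (the consequence).} If $i\in\mathcal{U}(h)$, then by definition $\beta_{i,h}<\rho$. Chaining this with Step 2 gives $\overline{t}_{i,h}^{A}-\underline{t}_{i,h}^{A}\le\beta_{i,h}<\rho$, which is in fact a strict inequality and stronger than claimed.

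I would also note in passing that the index range in the statement should read $i\in\{1,\ldots,N\}$ rather than $\{1,\ldots,D\}$, since $i$ indexes training points. The argument is uniform in $i$ and in $X_h$.
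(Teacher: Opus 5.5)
Your proposal is correct and matches the paper's proof essentially step for step. The paper likewise writes the gap as $(\alpha_i^++\alpha_i^-)(\overline{k}_{i,h}^{A}-\underline{k}_{i,h}^{A})=|\alpha_i|(\overline{k}_{i,h}^{A}-\underline{k}_{i,h}^{A})$, bounds it by $|\alpha_i|\overline{k}_{i,h}^{A}=\beta_{i,h}$ using $0\le\underline{k}_{i,h}^{A}\le\overline{k}_{i,h}^{A}$, and concludes from $\beta_{i,h}<\rho$ on $\mathcal{U}(h)$. Your remarks that the index set should be $\{1,\ldots,N\}$ and that the final inequality is strict are both right; one small slip is that when $\alpha_i=0$ neither $\alpha_i^\pm$ is nonzero, but $\alpha_i^++\alpha_i^-=|\alpha_i|$ still holds, so the argument is unaffected.
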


\begin{proof}
By definition of the sign-aware analytical bounds,
\[
\overline{t}_{i,h}^{A}-\underline{t}_{i,h}^{A}
=
(\alpha_i^++\alpha_i^-)\left(\overline{k}_{i,h}^{A}-\underline{k}_{i,h}^{A}\right)
=
|\alpha_i|\left(\overline{k}_{i,h}^{A}-\underline{k}_{i,h}^{A}\right).
\]
Since $0\le \underline{k}_{i,h}^{A}\le \overline{k}_{i,h}^{A}$, the right-hand side is bounded from above by $|\alpha_i|\overline{k}_{i,h}^{A}=\beta_{i,h}$. If $i\in\mathcal{U}(h)$, then $\beta_{i,h}<\rho$ by definition, and the second claim follows.
\end{proof}

Proposition~\ref{prop:analytic_gap_beta} does not imply that unimportant terms are negligible when taken in composition.  Rather, it shows that each unimportant (and hence treated with the analytical relaxation) term has limited individual range on $X_h$ relative to the dominant terms. PALM-Mean therefore allocates the more expensive piecewise-linear bounds only to kernel functions at indices in $\mathcal{I}(h)$, while kernels with indices in $\mathcal{U}(h)$ are handled with relatively inexpensive analytical bounds.

Figure~\ref{fig:analytic_lb}(a) illustrates this overall idea. Over a fixed spatial region, the local shape of the posterior mean is determined primarily by kernel function to data points near that region, while data points further away contribute less influence over the node subdomain. 
Intuitively, the GP surrogate model implicitly encodes that nearby observations are more informative than distant ones, with the kernel function assigning higher correlation (and thus greater influence) to points that are close in the input space. 
This is exactly the structure exploited by the important/unimportant partition.

\begin{figure}[t]
\centering
\includegraphics[width=\textwidth]{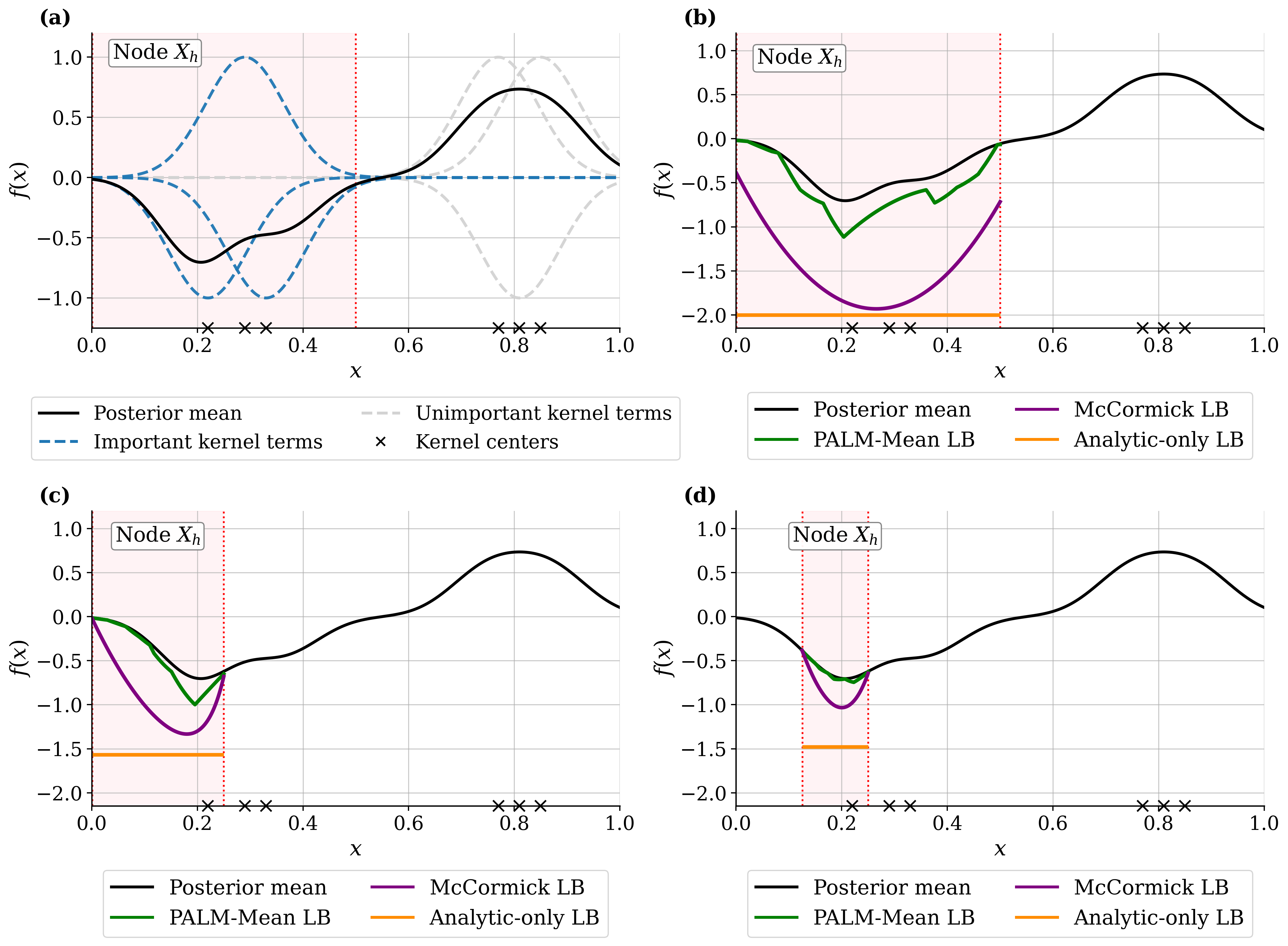}
\caption{Illustrative figure for PALM-Mean. (a) Illustration of local term influence in a one-dimensional posterior mean. Kernel with centers close to the current spatial region (dashed blue curve) determine the local shape of the mean within the node, while kernel with distant centers contribute little variation over the node (dashed light grey curve). This motivates analytical treatment of unimportant terms and piecewise treatment of important terms. (b)-(d) Iteration 1-3 for spatial branch-and-bound algorithm with hybrid node lower bound of PALM-Mean (green curve), McCormick relaxation (purple curve), and analytical bound (orange line). PALM-Mean's hybrid bound is the tightest, followed by McCormick relaxation and then analytical bound. Area shaded with light red denotes the active node $X_h$. We define 5 break points on each piecewise-linear bound.}
\label{fig:analytic_lb}
\end{figure}

\subsection{Hybrid lower bound}

Following the above motivation, we combine the piecewise-linear bounds for important terms (defined as in Section~\ref{sec:important_terms}) with the analytical bounds for remaining unimportant terms to yield the PALM-Mean lower bound
\begin{equation}
\underline{\mu}_h^{\mathrm{PALM}}(x)
:=
\sum_{i\in\mathcal{I}(h)}
\left(
\alpha_i^+ \underline{k}_{i,h}^{P}(x)
-
\alpha_i^- \overline{k}_{i,h}^{P}(x)
\right)
+
\sum_{i\in\mathcal{U}(h)}
\underline{t}_{i,h}^{A}.
\label{eq:PALM_functional_lb}
\end{equation}
The associated lower bound to be used in branch-and-bound nodes is then
\begin{align}
\mathrm{LB}^{\mathrm{PALM}}(X_h)
=
\min_{x\in X_h}\underline{\mu}_h^{\mathrm{PALM}}(x).
\label{eq:PALM-lb}
\end{align}
A mixed-integer representation of this problem is obtained by introducing one scalar distance variable per important term together with piecewise-linear envelope variables. The resulting node problem is a (non-convex) mixed-integer quadratically constrained program (MIQCP). The mixed-integer structure enters through the segment-activation constraints, e.g., through special ordered sets or alternative formulation~\cite{HuchetteVielma2023Nonconvex}, while the non-convexity enters through the exact quadratic equalities linking the scalar distance variables to $x$. One such explicit encoding of the MIQCP is provided in Appendix~\ref{app:mip_encoding}.

Upper bounds during branch and bound are obtained separately by local optimization of the true posterior mean function $\mu(x)$ over $X_h$. If $\hat{x}_h\in X_h$ is any feasible point returned by a local solver, then $\mu(\hat{x}_h)$ is a valid upper bound on $\min_{x\in X_h}\mu(x)$.

\subsection{Validity, convergence, and implementation summary}

We now state the two basic properties required by the overall algorithm: (i) validity of the proposed hybrid lower bound over node subdomains and (ii) $\varepsilon$-global convergence of the resulting spatial branch-and-bound method.

\begin{proposition}
\label{thm:PALM_validity}
For every node subdomain $X_h\subseteq\mathcal{X}$, $\mathrm{LB}^{\mathrm{PALM}}(X_h)\le \min_{x\in X_h}\mu(x)$.
Hence, solving the PALM-Mean node problem \eqref{eq:PALM-lb} defines a valid lower bound for spatial branch-and-bound.
\end{proposition}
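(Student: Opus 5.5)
The plan is to show that $\underline{\mu}_h^{\mathrm{PALM}}(x)\le\mu(x)$ holds pointwise for every $x\in X_h$, and then take minima over $X_h$. The natural tool is Proposition~\ref{prop:sign_aware_term_bounds}. It reduces the pointwise claim to verifying, term by term, that the kernel bounds used in \eqref{eq:PALM_functional_lb} sandwich $k_i$ on $X_h$.

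\textbf{Unimportant terms.} For $i\in\mathcal{U}(h)$, the constants $\underline{k}_{i,h}^{A}\le k_i(x)\le\overline{k}_{i,h}^{A}$ hold on $X_h$ by monotonicity of the kernel in scaled distance (Assumption~\ref{assump:standing}). They are obtained by evaluating the kernel at the maximum and minimum scaled distances from $x_i$ to the box (Appendix~\ref{app:analytic_bounds}). Proposition~\ref{prop:sign_aware_term_bounds} applied to these constant bounding functions gives $\underline{t}_{i,h}^{A}\le\alpha_ik_i(x)$.

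\textbf{Important terms.} For $i\in\mathcal{I}(h)$, I need $\underline{k}_{i,h}^{P}(x)\le k_i(x)\le\overline{k}_{i,h}^{P}(x)$ for all $x\in X_h$. Here $\underline{k}^{P}_{i,h}(x)$ and $\overline{k}^{P}_{i,h}(x)$ denote the piecewise-linear envelopes $\underline{\phi}_i$ and $\overline{\phi}_i$ of the scalar profile $\kappa$, evaluated at $s=d_i(x)$ (RBF) or $s=r_i(x)$ (Mat\'ern). I will argue this in two steps.
\begin{enumerate}
\item The range of $s$ over $X_h$ lies in the interval $[s_{\min},s_{\max}]$ on which the breakpoints are placed, namely the analytical min/max distances.
\item On each segment of that interval, the tangent and secant pieces bound $\kappa$ correctly. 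In the convex case (RBF in $d$, Mat\'ern-$1/2$ in $r$), tangents underestimate and secants overestimate. In the Mat\'ern-$3/2$ and $5/2$ cases, the roles swap on concave segments. Segments straddling the inflection point are handled by the appendix construction.
\end{enumerate}
Then Proposition~\ref{prop:sign_aware_term_bounds} gives $\alpha_i^+\underline{k}^P_{i,h}(x)-\alpha_i^-\overline{k}^P_{i,h}(x)\le\alpha_ik_i(x)$.

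Summing both groups over the disjoint index sets $\mathcal{I}(h)\cup\mathcal{U}(h)=\{1,\dots,N\}$ yields $\underline{\mu}_h^{\mathrm{PALM}}(x)\le\mu(x)$ on $X_h$. Taking the minimum of both sides over the compact set $X_h$ gives $\mathrm{LB}^{\mathrm{PALM}}(X_h)\le\min_{x\in X_h}\mu(x)$; both minima exist by continuity and piecewise-linear lower semicontinuity. I will also note that the MIQCP encoding is exact for this minimization, not a further relaxation. This holds because the quadratic equalities tie $s$ to $x$ exactly. The only slack comes from the envelope variables being free to lie between segment bounds, and such slack only lowers the optimal value, which preserves validity as long as each envelope variable is constrained on the correct side.

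\textbf{Main obstacle.} I expect the hard step to be the segmentwise envelope validity for the Mat\'ern-$3/2$ and $5/2$ kernels, on segments containing the inflection point. There a single tangent or secant is not one-sided. My first attempt would be to insert the inflection point as a mandatory breakpoint, so that every segment is purely convex or purely concave. If the appendix construction instead uses a different safe line on mixed segments, I would verify it directly via the sign of $\kappa''$ on each side.
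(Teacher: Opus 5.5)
Your proposal is correct and follows the paper's proof: you sandwich each kernel term pointwise (piecewise envelopes for $i\in\mathcal{I}(h)$, analytical constants for $i\in\mathcal{U}(h)$), apply Proposition~\ref{prop:sign_aware_term_bounds}, sum over the partition, and minimize over $X_h$. You also spell out what the paper leaves as ``by construction'' (that $\zeta_i(x)$ stays inside the breakpoint interval, and the segmentwise tangent/secant roles with the inflection point inserted as a breakpoint), while your MIQCP remark is unnecessary, because $\mathrm{LB}^{\mathrm{PALM}}(X_h)$ is defined directly as the minimum of $\underline{\mu}_h^{\mathrm{PALM}}$ (and in the appendix encoding $p_i,q_i$ are pinned to the active affine pieces, so there is no envelope slack).
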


\begin{proof}
Fix a node $X_h$ and an arbitrary point $x\in X_h$. For each kernel function at important index $i\in\mathcal{I}(h)$, the piecewise linear envelopes are constructed such that
\[
\underline{k}_{i,h}^{P}(x)\le k_i(x)\le \overline{k}_{i,h}^{P}(x).
\]
Applying Proposition~\ref{prop:sign_aware_term_bounds} to these kernel bounds gives
\[
\alpha_i^+\underline{k}_{i,h}^{P}(x)-\alpha_i^-\overline{k}_{i,h}^{P}(x)\le \alpha_i k_i(x).
\]
For each kernel function at unimportant index $i\in\mathcal{U}(h)$, the analytical bounds in Appendix~\ref{app:analytic_bounds} satisfy $\underline{t}_{i,h}^{A}\le \alpha_i k_i(x)$. Summing the important-term inequalities and the unimportant-term inequalities over all indices gives
\[
\underline{\mu}_h^{\mathrm{PALM}}(x)\le \mu(x),
\qquad \forall x\in X_h.
\]
Since this inequality holds pointwise over the node subdomain, minimizing both sides over $x\in X_h$ yields
\[
\min_{x\in X_h}\underline{\mu}_h^{\mathrm{PALM}}(x)
\le
\min_{x\in X_h}\mu(x),
\]
which is exactly the claimed validity statement.
\end{proof}

\begin{proposition}
\label{prop:hybrid_consistency}
Let $\operatorname{diam}(X_h)$ denote the diameter of node subdomain $X_h$. Then, for each fixed index $i$, the gap between the exact term $\alpha_i k_i(x)$ and its PALM-Mean lower estimator on $X_h$ vanishes uniformly as $\operatorname{diam}(X_h)\to0$.
\end{proposition}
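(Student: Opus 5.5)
Fix an index $i$ and a node $X_h$. Whether $i$ is important or unimportant on $X_h$, its PALM-Mean lower estimator has the sign-aware form of Proposition~\ref{prop:sign_aware_term_bounds}, $\underline{t}_{i,h}=\alpha_i^+\underline{k}_{i,h}-\alpha_i^-\overline{k}_{i,h}$, with $(\underline{k}_{i,h},\overline{k}_{i,h})$ either the analytical pair or the piecewise-linear pair. Since $\alpha_i^+\alpha_i^-=0$, we have for every $x\in X_h$
\[
0\le \alpha_i k_i(x)-\underline{t}_{i,h}(x)\le |\alpha_i|\bigl(\overline{k}_{i,h}(x)-\underline{k}_{i,h}(x)\bigr).
\]
This holds with either pair. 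It therefore suffices to show that, for both constructions, $\sup_{x\in X_h}\bigl(\overline{k}_{i,h}(x)-\underline{k}_{i,h}(x)\bigr)\to 0$ as $\operatorname{diam}(X_h)\to0$, uniformly over the position of $X_h$ in $\mathcal{X}$. Because the classification into $\mathcal{I}(h)$ or $\mathcal{U}(h)$ may change from node to node, I would prove both cases and take the larger of the two bounds.

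\textbf{Analytical case.} The bounds are $\overline{k}^A_{i,h}=\kappa(s_{\min})$ and $\underline{k}^A_{i,h}=\kappa(s_{\max})$. Here $\kappa$ is the scalar kernel profile, and $s$ is $r_i$ (Mat\'ern) or $d_i$ (RBF), minimized and maximized over $X_h$. The map $x\mapsto r_i(x)$ is Lipschitz with constant $1/\min_j \ell_j$, so
\[
r_{\max}-r_{\min}\le \operatorname{diam}(X_h)/\min_j\ell_j .
\]
On the compact $\mathcal{X}$, $r_i$ is bounded by some $R$, and $d_i=r_i^2$ is Lipschitz with constant $2R/\min_j\ell_j$. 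Each profile $\kappa$ is continuous on a compact interval $[0,S]$, hence uniformly continuous. The gap $\kappa(s_{\min})-\kappa(s_{\max})$ is therefore at most $\omega_\kappa(C\operatorname{diam}(X_h))\to0$, where $\omega_\kappa$ is the modulus of continuity. In fact $\kappa$ is Lipschitz for all four kernels, which gives a linear rate.

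\textbf{Piecewise-linear case.} The envelopes are built on the scalar interval $[s_{\min},s_{\max}]$ from tangents and secants (Appendix~\ref{app:tangent_secant}). At each point, they sandwich $\kappa(s_i(x))$ between two values lying in the range of the lines restricted to that interval. A secant on a subsegment lies between the endpoint values of $\kappa$, so it stays within $[\kappa(s_{\max}),\kappa(s_{\min})]$. A tangent at a point $s_0$ of the interval deviates from $\kappa$ by at most $\sup|\kappa'|\cdot(s_{\max}-s_{\min})$ on that interval. So the envelope gap is bounded by $(2\sup_{[0,S]}|\kappa'|+\text{Lip}(\kappa))(s_{\max}-s_{\min})$, which again is $O(\operatorname{diam}(X_h))$.

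\textbf{Main obstacle.} The delicate step is the Mat\'ern-$3/2$ and Mat\'ern-$5/2$ piecewise construction near the inflection point, where the tangent and secant roles switch. There I would avoid tracking the convex and concave pieces separately. Instead I would rely only on the fact, from Appendix~\ref{app:tangent_secant}, that every envelope segment is either a chord of $\kappa$ or a supporting tangent of $\kappa$ at a point of the current interval, possibly shifted to remain valid. I would then bound its deviation by the Lipschitz constant times the interval length, as above. A second concern is the Mat\'ern derivative at $r=0$, which is finite for all $\nu\ge 1/2$ considered, so $\sup|\kappa'|<\infty$. Multiplying the resulting uniform bound by $|\alpha_i|$ gives the claim.
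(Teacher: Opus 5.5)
Your proposal is correct and follows the same route as the paper's proof. You use the sign-aware form to reduce to the width of the kernel sandwich, then show this width collapses in two cases: for the analytical bounds (shrinking distance interval plus continuity of the profile) and for the piecewise envelopes (shrinking scalar interval on which the tangent/secant lines converge to $\kappa$). What you add is a quantitative sharpening. Lipschitz estimates for $r_i$, $d_i$ and $\kappa$ give an explicit $O(\operatorname{diam}(X_h))$ rate, uniform over node position and over the important/unimportant classification. Your chord-or-tangent bound also makes the paper's separate concave/convex treatment at the Mat\'ern inflection point unnecessary for the vanishing-gap claim. The hedge about tangents ``possibly shifted to remain valid'' can be dropped, since the construction inserts the inflection point as a breakpoint instead of shifting lines.
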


\begin{proof}
Consider first an analytically treated term. By construction, the lower and upper kernel bounds are obtained by evaluating a continuous monotone kernel profile at the minimum and maximum scaled distance from the node to the corresponding training point. As the node diameter tends to zero, these minimum and maximum distances converge to the same limit uniformly on the node. Continuity of the kernel profile therefore implies that the analytical kernel interval collapses uniformly, and the same is then true of the sign-aware analytical term interval.

Now consider a piecewise-treated term. Its lower and upper envelopes are defined in a scalar variable, either $d_i(x)$ for the RBF kernel or $r_i(x)$ for the Mat\'ern kernels. As the node diameter tends to zero, the corresponding scalar interval also shrinks to a point. On such shrinking intervals, the gap between a continuously differentiable function and its tangent/secant envelope vanishes uniformly. For the Mat\'ern-$3/2$ and Mat\'ern-$5/2$ kernels, if an interval crosses an inflection point, the construction inserts that point into the breakpoint set and applies the same argument separately on the concave and convex pieces. Hence the piecewise kernel envelopes converge uniformly to the exact kernel value. Applying the sign-aware transformation preserves this vanishing-gap property for each signed term contribution.
\end{proof}

\begin{proposition}
\label{thm:PALM_convergence}
Assume that each node lower-bounding problem is solved to global optimality up to the prescribed numerical tolerance, that node upper bounds are computed from feasible evaluations of the true posterior mean, and that the branching rule generates a sequence of nested boxes whose diameters converge to zero along every infinite branch. A spatial branch-and-bound algorithm satisfying these standard assumptions terminates with an $\varepsilon$-global minimizer of \eqref{eq:mean_problem}, up to the solver tolerances used in the node subproblems.
\end{proposition}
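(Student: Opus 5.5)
The plan is the standard convergence argument for spatial branch-and-bound: the algorithm converges once the lower bounds are valid and \emph{consistent}, i.e.\ the node gap $\min_{x\in X_h}\mu(x)-\mathrm{LB}^{\mathrm{PALM}}(X_h)$ tends to zero as $\operatorname{diam}(X_h)\to 0$.

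\textbf{Step 1 (validity).} Proposition~\ref{thm:PALM_validity} shows that no node containing a global minimizer is pruned unless its lower bound is within $\varepsilon$ of the incumbent. Since every upper bound is a feasible evaluation of the exact $\mu$, the incumbent $\mathrm{UB}^\star$ never drops below $\mu^\star:=\min_{x\in\mathcal{X}}\mu(x)$. Hence, at any termination with the global lower bound $\min_h \mathrm{LB}(X_h)\ge \mathrm{UB}^\star-\varepsilon$, the incumbent satisfies $\mu(\hat x)\le \mu^\star+\varepsilon$.

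\textbf{Step 2 (consistency).} For each node, sum the termwise gaps of Proposition~\ref{prop:hybrid_consistency}:
\[
0\le \mu(x)-\underline{\mu}_h^{\mathrm{PALM}}(x)\le \sum_{i=1}^N g_i(X_h),
\]
where $g_i(X_h)$ is the uniform gap of term $i$ under whichever treatment (piecewise or analytical) it receives on $X_h$. The sum is finite, and each summand vanishes regardless of how the sets $\mathcal{I}(h),\mathcal{U}(h)$ change along a branch, because each treatment individually has vanishing gap.

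Some care is needed because Proposition~\ref{prop:hybrid_consistency} is phrased pointwise in $i$ but not uniformly in the node location. I would strengthen it to uniform convergence over all boxes in $\mathcal{X}$ with diameter at most $\delta$. The kernel profiles are uniformly continuous on the compact range of scaled distances attained over $\mathcal{X}$. The tangent/secant gap on a scalar interval of length $\ell$ is bounded by $\tfrac18 \ell^2\max|\kappa''|$, or by a modulus of continuity of $\kappa'$. The map $x\mapsto r_i(x)$ or $d_i(x)$ is Lipschitz on $\mathcal{X}$, so scalar interval lengths are $O(\operatorname{diam}(X_h))$. This yields a single function $\omega(\delta)\to0$ with $\min_{X_h}\mu-\mathrm{LB}(X_h)\le\omega(\operatorname{diam}X_h)$. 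Solver tolerance in the node problem adds at most a fixed $\eta$, which is where the phrase ``up to solver tolerances'' enters.

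\textbf{Step 3 (finite termination).} Argue by contradiction. Suppose the algorithm never terminates. Then, because the tree is finitely branching with infinitely many nodes, K\"onig's lemma gives an infinite branch of nested boxes $X_{h_k}$ that are never fathomed, with $\operatorname{diam}(X_{h_k})\to0$ by assumption. Choose $k$ with $\omega(\operatorname{diam}X_{h_k})<\varepsilon-\eta$. The local solve on $X_{h_k}$ returns a feasible point, so
\[
\mathrm{UB}^\star\le\min_{X_{h_k}}\mu \le \mathrm{LB}(X_{h_k})+\omega+\eta<\mathrm{LB}(X_{h_k})+\varepsilon .
\]
Here I use the minimum of $\mu$ over $X_{h_k}$; if the local solver only returns some $\mu(\hat x)$ with $\hat x\in X_{h_k}$, then $\mu(\hat x)-\min_{X_{h_k}}\mu\to0$ by uniform continuity of $\mu$, which is absorbed into $\omega$. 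The displayed inequality means the node is fathomed, a contradiction. Termination, combined with Step 1, yields an $\varepsilon$-global minimizer (or, with a best-first selection rule, convergence of the bounds if $\varepsilon$ is allowed to vanish).

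The main obstacle is the uniformity in Step 2, particularly for Mat\'ern-$3/2$ and $5/2$ kernels with inflection-point splitting, and for Mat\'ern kernels whose profile in $r$ must remain well behaved near $r=0$. I would handle the latter by noting that $\kappa$ is $C^1$ on $[0,\infty)$ and using the modulus of continuity of $\kappa'$ rather than second derivatives.
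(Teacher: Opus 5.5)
Your proposal is correct and follows the same route as the paper's proof: validity via Proposition~\ref{thm:PALM_validity}, feasible upper bounds from exact evaluations of $\mu$, and vanishing node gaps via Proposition~\ref{prop:hybrid_consistency} along branches whose diameters shrink, closed off by the standard spatial branch-and-bound argument. The paper only cites ``standard arguments'' for the last step, and you make them explicit: a uniform modulus $\omega$ over all boxes that is unaffected by how $\mathcal{I}(h)$ and $\mathcal{U}(h)$ change along a branch, plus the K\"onig's-lemma contradiction; note that your termination step implicitly needs the node-solver tolerance to satisfy $\eta<\varepsilon$, which is the natural reading of the ``up to solver tolerances'' caveat.
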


\begin{proof}
Proposition~\ref{thm:PALM_validity} implies that every node lower bound is valid for the exact problem, so pruning cannot discard any node that might still contain a point whose objective value is smaller than the incumbent by more than the tolerance. Feasible evaluations of the true posterior mean provide valid upper bounds throughout the algorithm. Proposition~\ref{prop:hybrid_consistency} shows that, along any infinite refinement sequence, the discrepancy between the node lower estimator and the exact posterior mean vanishes. Together with the assumption that node diameters shrink to zero, this gives asymptotically exact lower bounding on any branch that is not pruned. Standard spatial branch-and-bound arguments (see, e.g., \cite{ref:reduced_space_bb}) then imply that the global lower bound converges to the global optimum from below while the incumbent upper bound converges from above, and termination at the requested absolute or relative tolerance yields an $\varepsilon$-global minimizer.
\end{proof}

The three essential points for correctness are therefore: (i) valid node lower bounds, (ii) feasible node upper bounds based on the exact posterior mean, and (iii) a branching algorithm that drives node diameters to zero. A few practical enhancements improve efficiency but are not part of the core proof. We use best-bound node selection, longest-edge bisection, optional root pre-partitioning, and adaptive node MIQCP termination gaps. These details are summarized fully in Appendix~\ref{app:full_algorithm}, where we also provide full pseudocode for the complete PALM-Mean implementation.

\section{Numerical experiments}
\label{sec:numerical-experiments}

In this section, we compare PALM-Mean to existing methods on a variety of problems. The experiments are organized around specific case studies so that the effect of the proposed hybrid lower bound can be assessed in settings of increasing difficulty. Throughout, the objective function is always the exact posterior mean in \eqref{eq:mean_problem}. Additional solver settings and dataset details are collected in Appendix~\ref{app:exp_details}.

\subsection{Experimental setup and baselines}

We consider both synthetic and real-world GP posterior mean minimization problems. For the synthetic studies, training data are sampled from multimodal benchmark functions or from samples generated from GP priors. For the real-world studies, the GP is trained directly on experimental data from the literature. Unless otherwise noted, all GP surrogates use an ARD RBF kernel with hyperparameters fitted by maximum likelihood estimation after mean-centering the responses.

A problem instance is declared solved when the deterministic global optimality gap satisfies the stopping criteria, where we define the absolute gap tolerance $\epsilon_{\text{abs}}=0.1$ and relative gap tolerance $\epsilon_{\text{rel}}=0.01$. On the synthetic benchmarks, all exact global solvers are given a time limit of $600$ seconds per instance. On the real-world applications, the time limits are $600$ seconds for the N-benzylation problem and $3600$ seconds for the autonomous additive manufacturing problem.

Our primary baselines are three state-of-the-art deterministic global solvers: SCIP \cite{ref:scip}, MAiNGO \cite{ref:maingo}, and BARON \cite{ref:baron}. These methods are appropriate baselines because they target the exact nonlinear objective; however, they do not exploit the posterior mean structure in the way PALM-Mean does. We report the performance of PALM-Mean in both single-core and $16$-core modes. In the latter, the node MIQCPs are solved using Gurobi \cite{gurobi} with multi-threading enabled, together with the practical enhancements described in the Appendix~\ref{app:full_algorithm}. We model the PWL functions using the native \texttt{setPWLObj()} function in Gurobi. Node upper bounds for PALM-Mean are obtained by local optimization of the true posterior mean using L-BFGS-B \cite{zhu1997algorithm}. The code to reproduce the experiments can be found on GitHub: \url{https://github.com/PaulsonLab/PALM-Mean}.

\subsection{Synthetic benchmarks}

\subsubsection{EggHolder function}

We begin with the two-dimensional EggHolder function, which provides a compact but highly multimodal test case:
\[
f(x)=-(47+x_2)\sin\!\left(\sqrt{\left|x_2+\frac{x_1}{2}+47\right|}\right)
-x_1\sin\!\left(\sqrt{\left|x_1-(x_2+47)\right|}\right),
\]
over the domain $x\in[-512,512]^2$. For each training size $N\in\{100,500,1000,1500\}$, we generate training data by Latin hypercube sampling \cite{ref:lhs, mckay2000comparison}, fit a GP posterior mean, and then globally minimize that posterior mean using each solver. Each setting is repeated $10$ times with different sampled training sets.

Figure~\ref{fig:eggholder} reports average CPU time and solved fraction. This case study is useful because it isolates scaling in the number of training points while keeping the dimension fixed and visually interpretable. PALM-Mean is the strongest method overall. In the single-core setting it solves all tested instances within the time limit, and the $16$-core configuration further reduces runtime. SCIP is the most competitive baseline, solving nearly all instances up to $N=1000$, but the fraction of solved problems drops sharply at $N=1500$. The performance of MAiNGO degrades earlier, and BARON becomes ineffective once the training set is moderately large.
This behavior is consistent with the structure of the proposed method. As $N$ grows, the posterior mean contains more kernel terms, but only a subset of them remains influential on any given branch-and-bound node. PALM-Mean benefits from exploiting that fact directly. By contrast, general-purpose global solvers must relax the full posterior expression more uniformly, and that burden grows quickly with the size of the training set.

\begin{figure}[t]
\centering
\includegraphics[width=\textwidth]{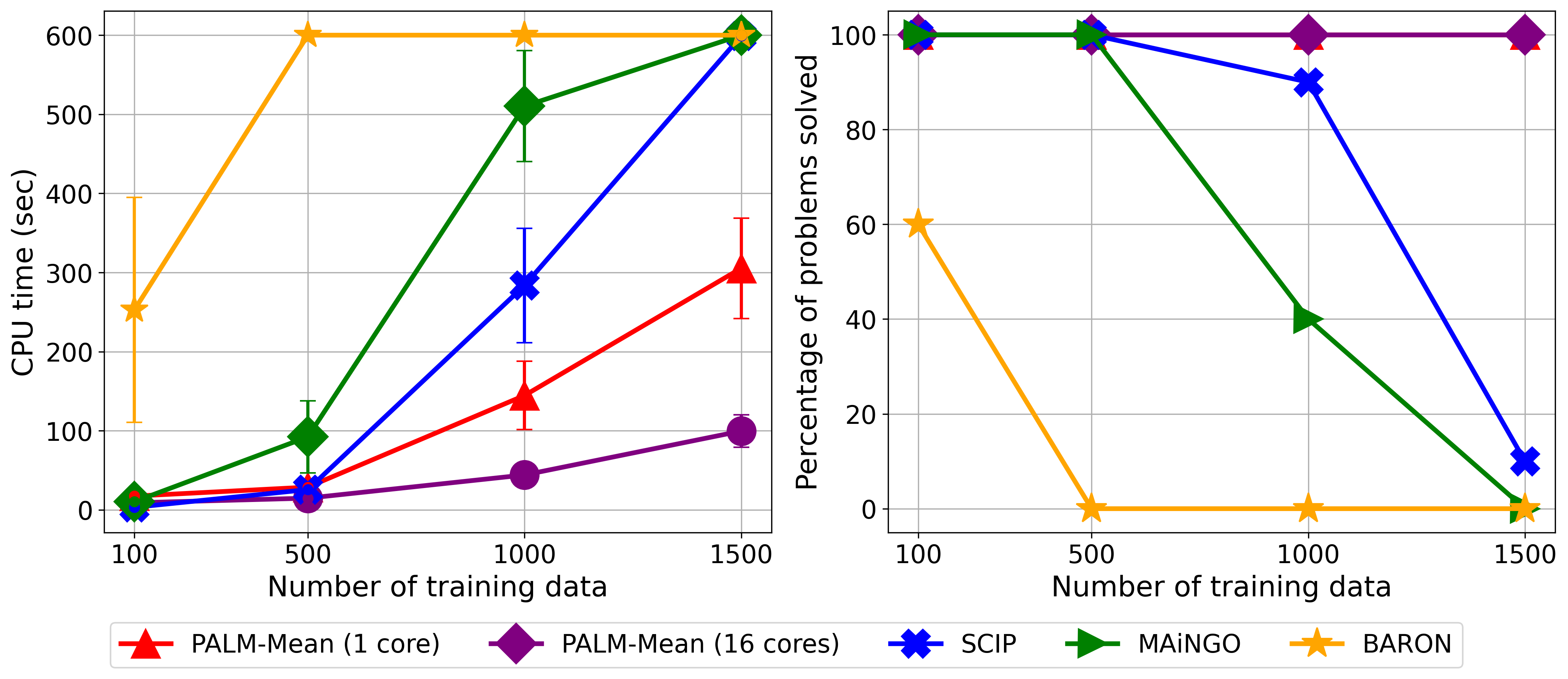}
\caption{Results for the EggHolder-based GP benchmark. Left: average CPU time over $10$ replicates, with error bars showing $\pm 1$ standard deviation. Right: percentage of instances solved within the $600$ second time limit.}
\label{fig:eggholder}
\end{figure}

\subsubsection{Schwefel function}

To study scaling in both dimension and training-set size, we next consider the Schwefel function, which is defined as follows
\[
f(x)=418.9829\,D-\sum_{j=1}^{D}x_j\sin\!\left(\sqrt{|x_j|}\right),
\]
over $[-500,500]^D$. As in the EggHolder study, the training data are generated by Latin hypercube sampling \cite{ref:lhs, mckay2000comparison}, a GP is fit to the sampled observations, and the resulting posterior mean is minimized globally. Each setting is repeated $40$ times with common random seeds across solvers. We consider two regimes: (i) a fixed-dimension study with $D=6$ and varying training size, and (ii) a fixed-training-size study with $N=400$ and varying input dimension.

Figure~\ref{fig:schwefel_cpu} reports CPU time, while Figure~\ref{fig:schwefel_solved} reports the corresponding solved fractions. The CPU-time plots show how quickly a solver converges for the instances that it is able to solve, whereas the solved-fraction plots reveal whether those average runtimes remain meaningful as the problems become harder.

We first consider the fixed-dimension study with $D=6$ and increasing training size. In Figure~\ref{fig:schwefel_cpu} (left), PALM-Mean has the lowest average CPU time across the full range of training sizes, with the $16$-core version giving the strongest performance overall. SCIP is the most competitive baseline, but its average runtime grows more rapidly as $N$ increases. MAiNGO and BARON become intractable much earlier and quickly reach the time limit. The solved-fraction results in Figure~\ref{fig:schwefel_solved} (left) highlights this trend. PALM-Mean with $16$ cores solves nearly all instances up to $N=500$ and still solves a large majority at $N=600$, while the solved fractions of the other deterministic solvers drop much more sharply. This is the regime where the hybrid structure of PALM-Mean is most directly beneficial. As before, as the number of training points grows, the posterior mean becomes a larger sum of kernel terms, but only a subset of those terms is locally important on any given node. PALM-Mean exploits that fact explicitly, whereas the other solvers relax the full posterior expression more uniformly.

We next consider the fixed-training-size study with $N=400$ and increasing dimension. Figure~\ref{fig:schwefel_cpu} (right) shows that PALM-Mean again has the best overall runtime profile, especially in the $16$-cores configuration. SCIP remains competitive through moderate dimension, but its runtime increases steadily and eventually becomes substantially larger. MAiNGO and BARON exhibit worse scaling with dimension, with BARON quickly becoming ineffective and MAiNGO losing competitiveness beyond the low-dimensional cases. The solved-fraction results in Figure~\ref{fig:schwefel_solved} (right) tell a similar story. PALM-Mean with $16$ cores remains the most reliable method throughout the entire dimensional range, while the solved fractions of SCIP, MAiNGO, and BARON decline more rapidly. The single-core PALM-Mean variant also remains competitive, although the benefit of parallelization becomes increasingly important in the higher-dimensional cases.

\begin{figure}[t]
\centering
\includegraphics[width=\textwidth]{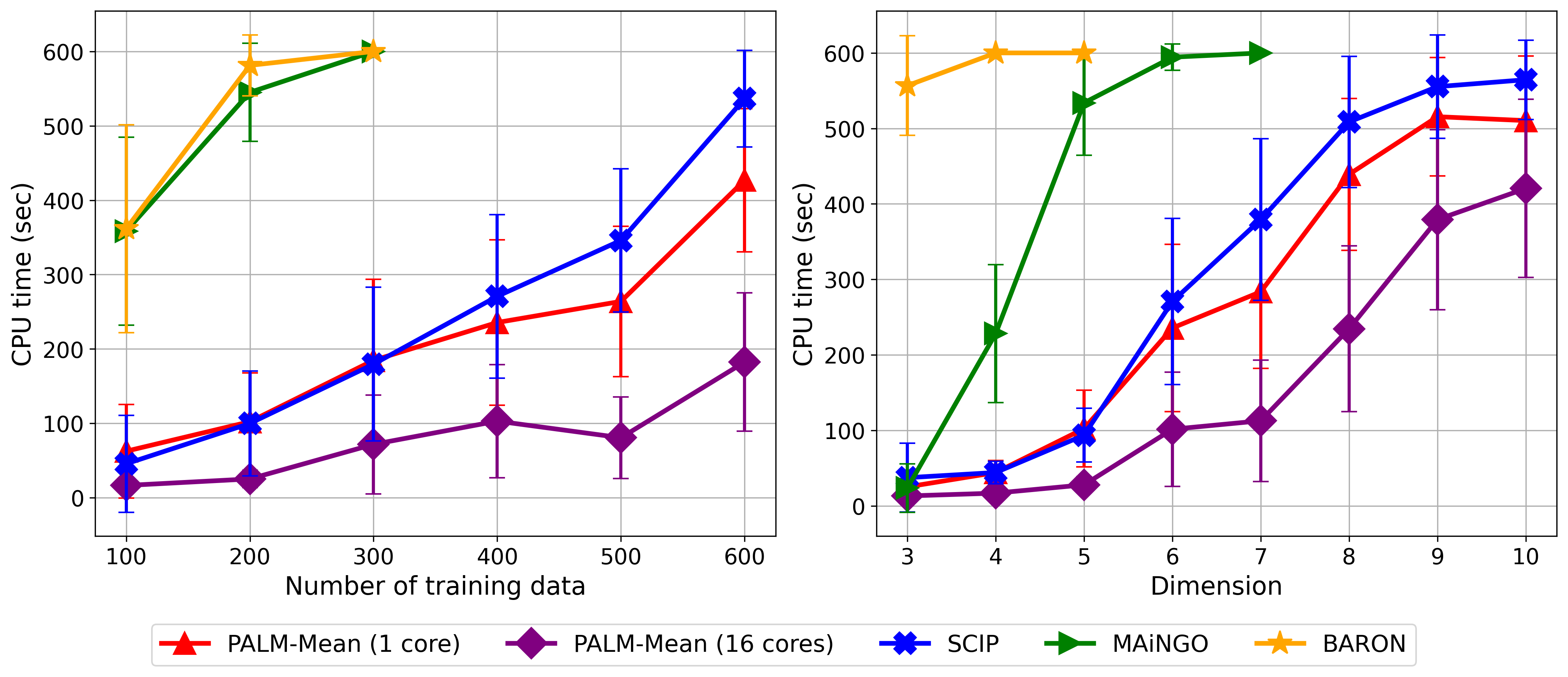}
\caption{CPU time on Schwefel-based GP benchmarks. Left: fixed dimension $D=6$ with varying number of training points. Right: fixed training size $N=400$ with varying dimension. Reported values are averages over $40$ replicates, with error bars corresponding to $\pm 1$ standard deviation.}
\label{fig:schwefel_cpu}
\end{figure}

\begin{figure}[t]
\centering
\includegraphics[width=\textwidth]{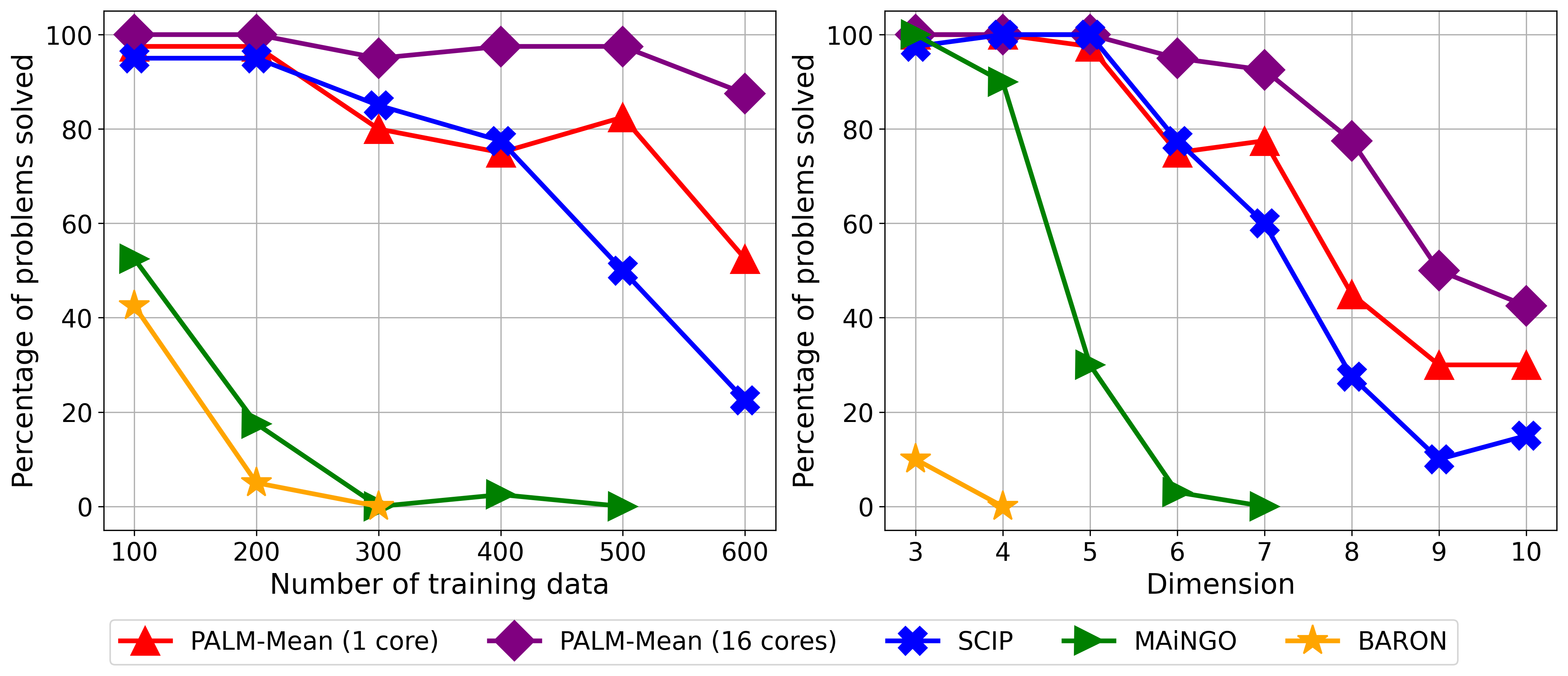}
\caption{Solved fraction on Schwefel-based GP benchmarks. Left: fixed dimension $D=6$ with varying number of training points. Right: fixed training size $N=400$ with varying dimension. Each point reports the percentage of instances solved within the $600$ second time limit over $40$ replicates.}
\label{fig:schwefel_solved}
\end{figure}

\subsubsection{Out-of-model GP-prior study}

The previous two studies are tied to specific (fixed) benchmark functions. To evaluate robustness across a broader family of posterior mean shapes, we also consider an out-of-model comparison based on GP prior realizations, which is common in BO benchmarking literature \cite{ref:bo_prior_benchmark_a,ref:bo_prior_benchmark_b}. Specifically, we generate $100$ objective functions by sampling from an oracle GP prior. For each sampled function, the dimension is drawn uniformly from $\{6,\ldots,10\}$, the ARD lengthscales are sampled from $\Gamma(2,0.1)$, and the number of training points is drawn uniformly from $\{200,\ldots,600\}$. A second GP is then fitted to the sampled observations (which are treated as black-box data), and we define the optimization task to minimize its posterior mean function.

Because the earlier synthetic studies show MAiNGO and BARON are uncompetitive in this data regime, we restrict this experiment to PALM-Mean and SCIP. Figure~\ref{fig:within_model} reports the cumulative fraction of solved instances as a function of cumulative CPU time. PALM-Mean with $16$ cores clearly outperforms the other approaches, both in final solved fraction and in the rate at which problem instances are solved. The single-core version is also competitive with SCIP, and in cumulative time it reaches a similar solved fraction earlier.

\begin{figure}[t]
\centering
\includegraphics[width=0.7\textwidth]{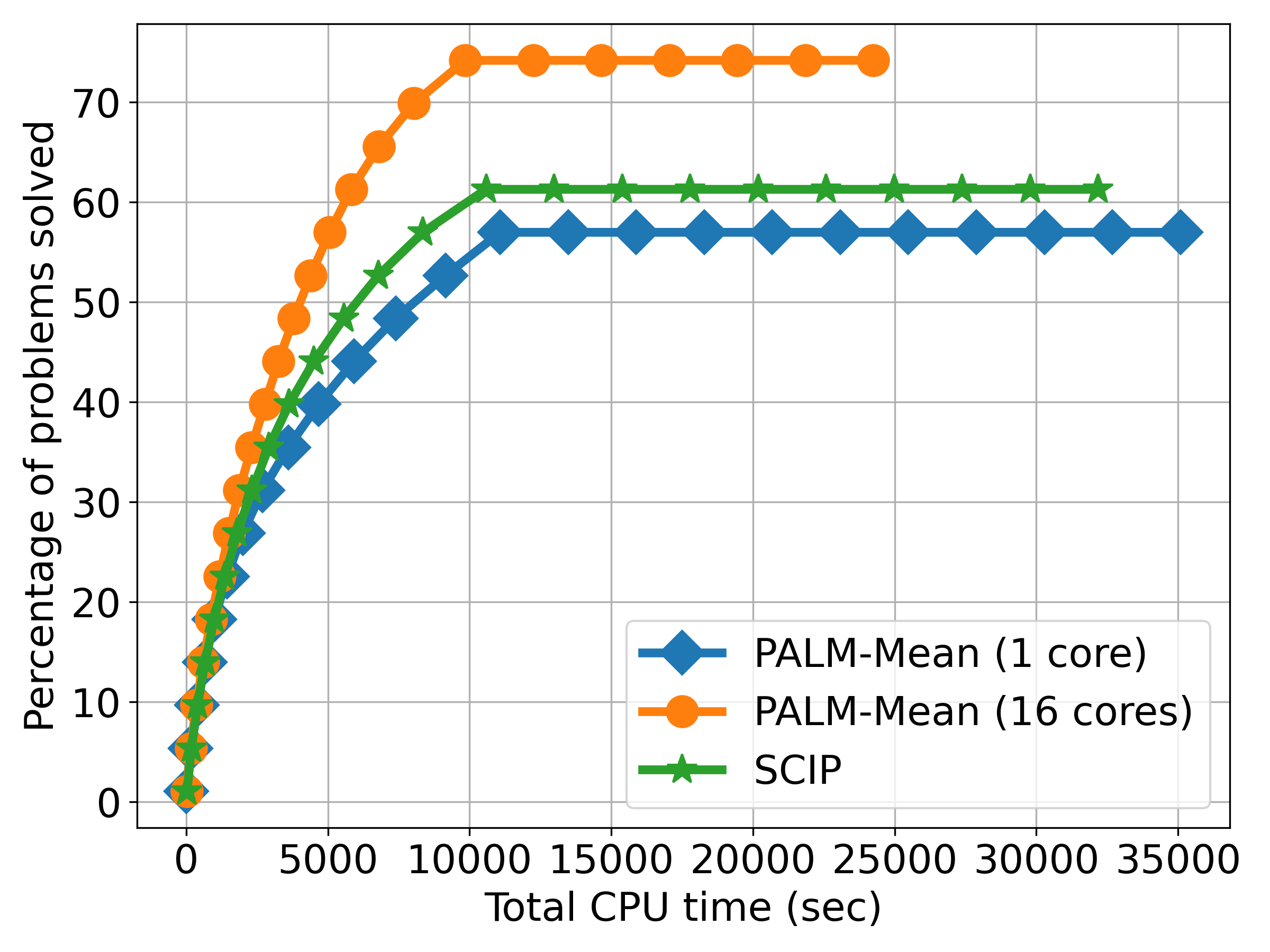}
\caption{Within-model comparison based on $100$ realizations of GP prior functions. The vertical axis reports the cumulative fraction of solved instances, and the horizontal axis reports cumulative CPU time.}
\label{fig:within_model}
\end{figure}

\subsection{Real-world benchmark problems}

We next consider two application-driven GP posterior mean optimization problems built from experimental data. These case studies are important because they test the method on trained surrogates that arise from realistic modeling workflows rather than only samples generated from synthetic functions.

\subsubsection{N-benzylation reaction}

Our first real-world benchmark is adapted from the N-benzylation reaction study in \cite{ref:gp_embedded_exact}. The original problem uses two GP models to describe the space-time yields (STY) of a desired secondary ($2^\circ$) amine product and an undesired tertiary ($3^\circ$) amine byproduct. To keep the present paper focused on posterior mean minimization, we convert the original constrained formulation into a penalized unconstrained problem using posterior means as surrogates:
\[
\min_{x\in\mathcal{X}}
-\left(
2^{\circ}\mathrm{STY}
-
\rho \max\!\big(3^{\circ}\mathrm{STY}-5,0\big)
\right),
\qquad \rho=100.
\]
There are total of 78 experimental data points in the training set. The four decision variables are the primary-amine feed flowrate, the benzyl-bromide-to-amine ratio, the solvent-to-amine ratio, and the operating temperature, with bounds
\[
x^L=[0.2,1.0,0.5,110],
\qquad
x^U=[0.4,5.0,1.0,150].
\]
Table~\ref{tab:benzylation_results} shows that PALM-Mean is substantially faster than the competing exact solvers. The single-core and $16$-cores variants both solve the problem to the target absolute gap, with the multi-threaded version requiring the least CPU time. SCIP also solves the problem within the alloted time, but it requires orders of magnitude more branch-and-bound nodes. MAiNGO and BARON both reach the time limit with weaker lower bounds.

\begin{table}[t]
    \centering
    \caption{Results for the N-benzylation reaction benchmark with a time limit of $600$ seconds.}
    \label{tab:benzylation_results}
    \begin{tabular}{lccccc}
        \toprule
        Method & Nodes & CPU time (s) & UB & LB & Abs.\ gap \\
        \midrule
        PALM-Mean (16 cores) & \textbf{110} & \textbf{15.4} & $-1.43$ & $-1.53$ & $0.10$ \\
        PALM-Mean (1 core)   & 110          & 33.5          & $-1.43$ & $-1.53$ & $0.10$ \\
        SCIP                 & 66,222       & 106.9         & $-1.43$ & $-1.53$ & $0.10$ \\
        MAiNGO               & 609,292      & 600           & $-1.43$ & $-4.95$ & $3.52$ \\
        BARON                & 45,861       & 600           & $-1.43$ & $-23.24$ & $21.81$ \\
        \bottomrule
    \end{tabular}
\end{table}

\subsubsection{Autonomous additive manufacturing}

Our second real-world benchmark is focused on optimization of printing conditions for autonomous additive manufacturing. We train a GP on the full AutoAM dataset with 100 experimental data \cite{ref:autoam_dataset,ref:autoam_bo}, and minimize the negative ratio between effective and desired specimen area:
\[
\min_{x\in\mathcal{X}} -\frac{A_{\mathrm{effective}}}{A_{\mathrm{desired}}}.
\]
The four decision variables are prime delay, print speed, $x$-offset correction, and $y$-offset correction, with bounds
\[
x^L=[0,0,-1,-1],
\qquad
x^U=[5,10,1,1].
\]

This is a harder instance than the N-benzylation problem and more clearly exposes the value of the implementation enhancements summarized in Appendix~\ref{app:full_algorithm}. As shown in Table~\ref{tab:autoam_results}, only PALM-Mean with $16$ cores and SCIP are able to solve the problem to the target absolute gap within the one-hour time limit. PALM-Mean is faster and does so with a dramatically smaller branch-and-bound tree. The single-core version does not fully solve the problem within the time limit, but its final gap is still small. MAiNGO and BARON again terminate with weak lower bounds.

\begin{table}[t]
    \centering
    \caption{Results for the autonomous additive manufacturing benchmark with a time limit of $3600$ seconds.}
    \label{tab:autoam_results}
    \begin{tabular}{lccccc}
        \toprule
        Method & Nodes & CPU time (s) & UB & LB & Abs.\ gap \\
        \midrule
        PALM-Mean (16 cores) & \textbf{4,423} & \textbf{1,074.1} & $-1.60$ & $-1.70$ & $0.10$ \\
        PALM-Mean (1 core)   & 3,163          & 3,600            & $-1.60$ & $-1.79$ & $0.19$ \\
        SCIP                 & 523,427        & 1,734.8          & $-1.60$ & $-1.70$ & $0.10$ \\
        MAiNGO               & 945,300        & 3,600            & $-1.60$ & $-8.24$ & $6.64$ \\
        BARON                & 69,958         & 3,600            & $-1.60$ & $-24.75$ & $23.15$ \\
        \bottomrule
    \end{tabular}
\end{table}

\subsection{Discussion}

Several conclusions are consistent across the considered case studies. First, the main advantage of PALM-Mean is not that it uses piecewise-linear envelopes, but rather that it uses them selectively, only for kernel terms that are locally important on the current node. The benefit of that design is most apparent when the number of training points is large, since that is when relaxing the full posterior sum becomes difficult for existing (more general-purpose) solvers.

Second, the performance gains are closely tied to search-tree size. On the real-world problems, PALM-Mean reduces the number of explored nodes by orders of magnitude relative to SCIP and MAiNGO. That is strong evidence that the hybrid lower bound is creating a smaller tree by enabling earlier pruning.

Third, the results also clarify the respective roles of local, exact global, and approximation-based methods. Local optimization remains useful as a fast incumbent generator, and PALM-Mean relies on it for upper bounding. Generic exact solvers can certify global optimality, but their relaxations become weak as the posterior mean grows in size and complexity. Approximation-based methods can improve tractability, but they do so by solving a modified problem. PALM-Mean occupies a useful middle ground in that it remains an exact deterministic method for posterior mean minimization, but it exploits the structure of the GP objective strongly enough to make substantially larger instances tractable.

However, PALM-Mean does not remove the intrinsic hardness of global optimization, and the node MIQCPs can still become expensive when many training points remain important over large parts of the domain or when the posterior mean is highly oscillatory. This is visible in the hardest synthetic settings and in the single-core AutoAM results. Even so, the results consistently show that the hybrid piecewise-analytic lower bound materially improves scalability relative to existing alternatives on exact posterior mean minimization problems.

\section{Conclusion}
\label{sec:conclusions}

This paper studies the deterministic global minimization of trained Gaussian process (GP) posterior mean functions over a hyperrectangular domain. Although the posterior mean admits a compact closed-form expression, its global optimization remains challenging because it is a non-convex weighted sum of kernel terms centered at all training points. Existing exact deterministic approaches target the original objective function, but can become difficult to scale as the posterior sum grows, while approximation-based approaches improve tractability by optimizing a modified objective.
To address this gap, we introduce PALM-Mean, a piecewise-analytic spatial branch-and-bound method tailored specifically to posterior mean minimization. The central idea is to combine tighter piecewise-linear treatment for nodewise important kernel terms with inexpensive analytical bounds for the remaining terms, yielding a valid hybrid lower bound for the exact posterior mean. This preserves deterministic correctness while allocating computational effort where it is most useful for pruning.

Our computational results show that this structure can make a substantial practical difference. Across synthetic benchmarks and real application problems, PALM-Mean consistently improves scalability relative to representative general-purpose deterministic global solvers, especially as the number of training points increased. The gains are reflected not only in actual runtime, but also in much smaller branch-and-bound trees, which is consistent with the stronger node lower bounds produced by the hybrid construction.
From a practical standpoint, these results show that deterministic global optimization of GP posterior mean functions does not need to rely solely on general-purpose nonlinear global solvers. When the objective has the specific structure of a trained GP posterior mean, that structure can be leveraged directly to obtain stronger relaxations and more effective global search. This is particularly relevant in low-data scientific and engineering settings, where GPs are a common surrogate model and certified global decisions can be practically important. 

Lastly, we note that the present paper is intentionally focused on the posterior mean problem, which provides the clearest setting in which to develop and analyze the piecewise-analytic perspective. At the same time, the same viewpoint suggests future extensions to broader GP posterior objectives. We view the mean-only case developed here as the foundation for those next steps. Conceptually, the same idea proposed in this work can be extended to optimizing the GP posterior variance or the lower confidence bound. However, the double summation over kernel products for the posterior variance could provide significant computational challenge to solve the lower bound. The development of extension methods tailored to problems involved with posterior variance is an interesting future research direction.

\bmhead{Acknowledgments} WTT, AK, and JAP were partially supported by the National Science Foundation (NSF) CAREER Award 2237616. CT was supported by a BASF/Royal Academy of Engineering Senior Research Fellowship. 

\section*{Declarations}

\subsection*{Data availability}
No new experimental data were generated in this study. The synthetic benchmark instances used in the numerical experiments can be generated using the scripts provided in the code repository. The literature-based data used for the real-world benchmark problems are available from the sources cited in the article. 

\subsection*{Code availability}
The code used to reproduce the numerical experiments and figures in this study is available from the project GitHub repository: \url{https://github.com/PaulsonLab/PALM-Mean}.

\appendix

\section{Additional technical details for PALM-Mean}

This appendix collects formulas and implementation details that are omitted from the main text for brevity. The goal is to make the lower-bounding construction and the resulting spatial branch-and-bound implementation sufficiently explicit that a reader can reproduce the method.

\subsection{Closed-form kernel bounds over a hyperrectangle}
\label{app:analytic_bounds}

Let $X_h=[x_h^L,x_h^U]\subseteq\mathcal{X}$ be a node of the spatial branch-and-bound tree, and fix a training point $x_i\in\mathbb{R}^D$. Recall that
\[
r_i(x)=\|L^{-1}(x-x_i)\|_2,
\qquad
d_i(x)=r_i(x)^2.
\]
Because $L=\mathrm{diag}(\ell_1,\ldots,\ell_D)$ is diagonal and $X_h$ is axis-aligned, the minimum and maximum of the scaled distance over $X_h$ can be computed coordinatewise.

It is useful to define coordinatewise nearest-distance and farthest-distance values
\[
\Delta^{\mathrm{near}}_{i,h,j}
:=
\begin{cases}
0, & x_{h,j}^L \le x_{i,j} \le x_{h,j}^U,\\
x_{h,j}^L - x_{i,j}, & x_{i,j} < x_{h,j}^L,\\
x_{i,j} - x_{h,j}^U, & x_{i,j} > x_{h,j}^U,
\end{cases}
\]
and
\[
\Delta^{\mathrm{far}}_{i,h,j}
:=
\max\!\left\{|x_{h,j}^L-x_{i,j}|,\;|x_{h,j}^U-x_{i,j}|\right\},
\qquad j=1,\ldots,D.
\]
Equivalently, the nearest point is the coordinatewise projection of $x_i$ onto $X_h$ and the farthest point is a farthest vertex of $X_h$.
The resulting min/max scaled distances are
\begin{equation}
r_{i,h}^L
=
\left(
\sum_{j=1}^D \frac{\big(\Delta^{\mathrm{near}}_{i,h,j}\big)^2}{\ell_j^2}
\right)^{1/2},
\qquad
r_{i,h}^U
=
\left(
\sum_{j=1}^D \frac{\big(\Delta^{\mathrm{far}}_{i,h,j}\big)^2}{\ell_j^2}
\right)^{1/2}.
\label{eq:app_r_bounds}
\end{equation}
For the RBF kernel, the corresponding bounds on squared scaled distance are
\[
d_{i,h}^L=(r_{i,h}^L)^2,
\qquad
d_{i,h}^U=(r_{i,h}^U)^2.
\]

These distances immediately yield analytical box bounds for the kernel term $k_i(x)=k(x,x_i)$. For the RBF kernel,
\[
\underline{k}_{i,h}^{A}
=
\sigma_f^2 \exp\!\left(-\frac{d_{i,h}^U}{2}\right),
\qquad
\overline{k}_{i,h}^{A}
=
\sigma_f^2 \exp\!\left(-\frac{d_{i,h}^L}{2}\right),
\]
and for the Mat\'ern-$\nu$ kernels with $\nu\in\{1/2,3/2,5/2\}$,
\[
\underline{k}_{i,h}^{A}
=
\kappa_\nu(r_{i,h}^U),
\qquad
\overline{k}_{i,h}^{A}
=
\kappa_\nu(r_{i,h}^L).
\]
Since each kernel profile is monotonically decreasing in its scalar distance argument, these constants satisfy
\[
\underline{k}_{i,h}^{A}\le k_i(x)\le \overline{k}_{i,h}^{A},
\qquad \forall x\in X_h.
\]
Then, applying Proposition~\ref{prop:sign_aware_term_bounds} directly yields the analytical term bounds
\[
\underline{t}_{i,h}^{A}
=
\alpha_i^+ \underline{k}_{i,h}^{A}
-
\alpha_i^- \overline{k}_{i,h}^{A},
\qquad
\overline{t}_{i,h}^{A}
=
\alpha_i^+ \overline{k}_{i,h}^{A}
-
\alpha_i^- \underline{k}_{i,h}^{A}.
\]

\subsection{Detailed tangent/secant formulas for RBF and Mat\'ern kernel envelopes}
\label{app:tangent_secant}

For each important term $i\in\mathcal{I}(h)$, PALM-Mean constructs a piecewise-linear envelope with respect to a scalar distance variable
\[
\zeta_i
:=
\begin{cases}
d_i(x), & \text{RBF kernel},\\
r_i(x), & \text{Mat\'ern kernels}.
\end{cases}
\]
Let the node-specific scalar range be $[\zeta_{i,h}^L,\zeta_{i,h}^U]$, and choose breakpoints
\[
\zeta_{i,h}^L=\eta_{i,0}<\eta_{i,1}<\cdots<\eta_{i,S_i}=\zeta_{i,h}^U.
\]
On each segment $[\eta_{i,s-1},\eta_{i,s}]$, the secant line for a generic scalar kernel profile $\kappa$ is
\begin{equation}
u_{i,s}(\zeta)
=
m^{\mathrm{sec}}_{i,s}\zeta+b^{\mathrm{sec}}_{i,s},
\qquad
m^{\mathrm{sec}}_{i,s}
=
\frac{\kappa(\eta_{i,s})-\kappa(\eta_{i,s-1})}{\eta_{i,s}-\eta_{i,s-1}},
\label{eq:app_secant}
\end{equation}
with
\[
b^{\mathrm{sec}}_{i,s}
=
\kappa(\eta_{i,s-1})-m^{\mathrm{sec}}_{i,s}\eta_{i,s-1}.
\]


Within nearby breakpoints $[\eta_{i,s-1},\eta_{i,s}]$, the tangent envelope for a generic scalar kernel profile $\kappa$ is
\begin{equation}
\ell_{i,s}(\zeta)= \max(m^{\mathrm{tan}}_{i,s-1}\zeta+b^{\mathrm{tan}}_{i,s-1}, m^{\mathrm{tan}}_{i,s}\zeta+b^{\mathrm{tan}}_{i,s}),~
m^{\mathrm{tan}}_{i,s-1}=\kappa'(\eta_{i,s-1}), ~ m^{\mathrm{tan}}_{i,s}=\kappa'(\eta_{i,s})
\label{eq:app_tangent}
\end{equation}
with
\begin{align*}
    b^{\mathrm{tan}}_{i,s-1} = \kappa(\eta_{i,s-1}) - \kappa'(\eta_{i,s-1})\eta_{i,s-1}, \qquad b^{\mathrm{tan}}_{i,s} = \kappa(\eta_{i,s}) - \kappa'(\eta_{i,s})\eta_{i,s}
\end{align*}
Notably, a tangent line constructed at any point within the interval (e.g., at the midpoint) also provides a valid lower-bounding envelope. A systematic comparison of the tightness of such single-point tangent approximations versus our multi-tangent envelope, as well as their respective computational costs, represents an interesting direction for future work.

\paragraph{RBF kernel.}
For the RBF kernel,
\[
\kappa_{\mathrm{RBF}}(d)=\sigma_f^2 e^{-d/2},
\qquad
\kappa_{\mathrm{RBF}}'(d)=-\frac{\sigma_f^2}{2}e^{-d/2},
\qquad
\kappa_{\mathrm{RBF}}''(d)=\frac{\sigma_f^2}{4}e^{-d/2}.
\]
Since $\kappa_{\mathrm{RBF}}$ is convex and decreasing on $[0,\infty)$, the tangent line is a valid lower bound and the secant line is a valid upper bound on every segment:
\[
\underline{\kappa}^{P}_{i,s}(d)=\ell_{i,s}(d),
\qquad
\overline{\kappa}^{P}_{i,s}(d)=u_{i,s}(d).
\]

\paragraph{Mat\'ern-$1/2$.}
For the Mat\'ern-$1/2$ kernel,
\[
\kappa_{1/2}(r)=\sigma_f^2 e^{-r},
\qquad
\kappa_{1/2}'(r)=-\sigma_f^2 e^{-r},
\qquad
\kappa_{1/2}''(r)=\sigma_f^2 e^{-r}.
\]
Thus, $\kappa_{1/2}$ is also convex and decreasing on $[0,\infty)$, so the same rule applies.

\paragraph{Mat\'ern-$3/2$.}
For the Mat\'ern-$3/2$ kernel,
\[
\kappa_{3/2}(r)=\sigma_f^2(1+\sqrt{3}\,r)e^{-\sqrt{3}r},
\]
\[
\kappa_{3/2}'(r)=-3\sigma_f^2 r e^{-\sqrt{3}r},
\qquad
\kappa_{3/2}''(r)=3\sigma_f^2(\sqrt{3}r-1)e^{-\sqrt{3}r}.
\]
The inflection point is
\[
r_{3/2}^{\mathrm{infl}}=\frac{1}{\sqrt{3}}.
\]
If $r_{i,h}^L<r_{3/2}^{\mathrm{infl}}<r_{i,h}^U$, this inflection point is inserted into the breakpoint set. On segments lying in the concave region $[0,r_{3/2}^{\mathrm{infl}}]$, the secant is a lower bound and the tangent is an upper bound. On segments in the convex region $(r_{3/2}^{\mathrm{infl}},\infty)$, these roles are reversed.

\paragraph{Mat\'ern-$5/2$.}
For the Mat\'ern-$5/2$ kernel,
\[
\kappa_{5/2}(r)
=
\sigma_f^2\left(1+\sqrt{5}r+\frac{5}{3}r^2\right)e^{-\sqrt{5}r},
\]
\[
\kappa_{5/2}'(r)
=
-\frac{5\sigma_f^2}{3}r(1+\sqrt{5}r)e^{-\sqrt{5}r},
\]
\[
\kappa_{5/2}''(r)
=
\frac{5\sigma_f^2}{3}\left(5r^2-\sqrt{5}r-1\right)e^{-\sqrt{5}r}.
\]
The positive inflection point is
\[
r_{5/2}^{\mathrm{infl}}
=
\frac{1+\sqrt{5}}{2\sqrt{5}}.
\]
As for Mat\'ern-$3/2$, this inflection point is inserted into the breakpoint set whenever it lies inside the node interval.

\paragraph{Segmentwise sign-aware bounds.}
Once the lower and upper kernel envelopes have been defined on a segment, the associated term bounds are given by
\[
\underline{t}^{P}_{i,s}(\zeta_i)
=
\alpha_i^+ \underline{\kappa}^{P}_{i,s}(\zeta_i)
-
\alpha_i^- \overline{\kappa}^{P}_{i,s}(\zeta_i),
\]
\[
\overline{t}^{P}_{i,s}(\zeta_i)
=
\alpha_i^+ \overline{\kappa}^{P}_{i,s}(\zeta_i)
-
\alpha_i^- \underline{\kappa}^{P}_{i,s}(\zeta_i).
\]

\subsection{Mixed-integer encoding of the piecewise-linear bounds}
\label{app:mip_encoding}

We now give one explicit mixed-integer encoding of the piecewise-linear bounds used for the important terms. This appendix uses a segment-activation formulation. Solver-native special ordered sets of type 2 (SOS2) constraints\footnote{\url{https://lpsolve.sourceforge.net/5.5/SOS.htm}} can also be used in practice, but the form below makes the lower-bounding model explicit.

Fix a node $X_h$ and an important index $i\in\mathcal{I}(h)$. Let
\[
[\eta_{i,0},\eta_{i,1}],\ldots,[\eta_{i,S_i-1},\eta_{i,S_i}]
\]
denote the segments of the scalar interval for $\zeta_i$. Introduce binary variables
\[
z_{i,s}\in\{0,1\},
\qquad s=1,\ldots,S_i,
\]
with the following single-segment activation constraint
\begin{equation}
\sum_{s=1}^{S_i} z_{i,s}=1.
\label{eq:app_one_segment}
\end{equation}
Let $p_i$ and $q_i$ denote the lower- and upper-envelope values used in the node objective. For each segment $s$, write the active lower and upper affine functions as
\[
\underline{\kappa}^{P}_{i,s}(\zeta)=m^L_{i,s}\zeta+b^L_{i,s},
\qquad
\overline{\kappa}^{P}_{i,s}(\zeta)=m^U_{i,s}\zeta+b^U_{i,s}.
\]
Note that the formulation presented here uses a single tangent line to construct the lower affine function for each segment. As discussed in Appendix~\ref{app:tangent_secant}, however, we adopt a different formulation in this work, where the lower affine function is defined as the maximum of two tangent lines evaluated at the nearby segment points. For simplicity of exposition, we focus on the single-tangent formulation in this section when describing the lower envelope. A valid disjunctive formulation is then
\begin{equation}
\eta_{i,s-1}-M_{i,s}^{\zeta}(1-z_{i,s})
\le \zeta_i \le
\eta_{i,s}+M_{i,s}^{\zeta}(1-z_{i,s}),
\qquad s=1,\ldots,S_i,
\label{eq:app_seg_interval}
\end{equation}
\begin{equation}
-M_{i,s}^{p}(1-z_{i,s})
\le
p_i-\left(m^L_{i,s}\zeta_i+b^L_{i,s}\right)
\le
M_{i,s}^{p}(1-z_{i,s}),
\qquad s=1,\ldots,S_i,
\label{eq:app_seg_p}
\end{equation}
\begin{equation}
-M_{i,s}^{q}(1-z_{i,s})
\le
q_i-\left(m^U_{i,s}\zeta_i+b^U_{i,s}\right)
\le
M_{i,s}^{q}(1-z_{i,s}),
\qquad s=1,\ldots,S_i.
\label{eq:app_seg_q}
\end{equation}
When solver-native SOS2 support is available, we prefer that implementation because it avoids manual tuning of big-$M$ values.

The exact link between $\zeta_i$ and the decision vector $x$ remains
\begin{equation}
\zeta_i=(x-x_i)^\top L^{-2}(x-x_i),
\qquad i\in\mathcal{I}(h),
\label{eq:app_rbf_link}
\end{equation}
for the RBF kernel, and
\begin{equation}
\zeta_i^2=(x-x_i)^\top L^{-2}(x-x_i),
\qquad
\zeta_i\ge0,
\qquad i\in\mathcal{I}(h),
\label{eq:app_matern_link}
\end{equation}
for the Mat\'ern kernels.

Combining the important-term piecewise variables with the unimportant-term analytical constants yields the node lower-bounding problem
\begin{equation}
\min_{x,\zeta,p,q,z}
\quad
\sum_{i\in\mathcal{I}(h)}
\left(\alpha_i^+ p_i-\alpha_i^- q_i\right)
+
\sum_{i\in\mathcal{U}(h)}
\underline{t}_{i,h}^{A}
\label{eq:app_full_node_obj}
\end{equation}
subject to $x\in X_h$, the piecewise constraints \eqref{eq:app_one_segment}-\eqref{eq:app_seg_q}, and the exact distance link \eqref{eq:app_rbf_link} or \eqref{eq:app_matern_link}. This is a non-convex MIQCP. The integer structure enters through the segment indicators $z_{i,s}$, and the non-convexity enters through the exact quadratic equalities linking $\zeta_i$ to $x$.

\subsection{Full PALM-Mean pseudocode and implementation details}
\label{app:full_algorithm}

Figure~\ref{fig:overview_appendix} provides an overview of the PALM-Mean workflow. The complete implementation is summarized in Algorithm~\ref{alg:palm_mean_full}. The logic is standard spatial branch-and-bound, but we include the optional preprocessing and adaptive node-tolerance features explicitly.

\begin{figure}[t]
\centering
\includegraphics[width=0.8\textwidth]{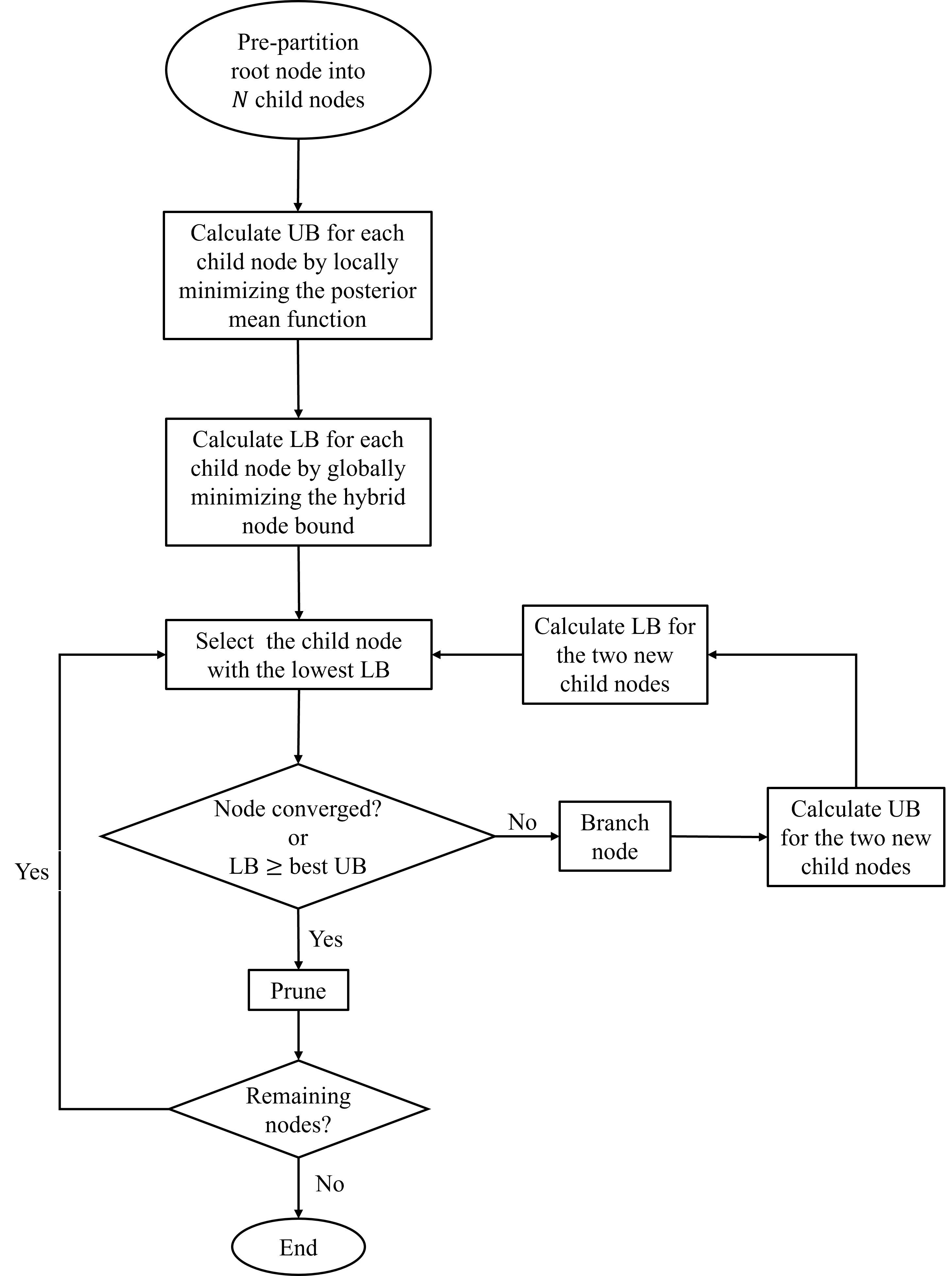}
\caption{Overview of the PALM-Mean workflow.}
\label{fig:overview_appendix}
\end{figure}

\begin{algorithm}[t]
\caption{PALM-Mean}
\label{alg:palm_mean_full}
\begin{algorithmic}[1]
\State \textbf{Input:} root box $\mathcal{X}$, trained GP quantities, importance threshold $\rho$, tolerances $\epsilon_{\mathrm{abs}}$, $\epsilon_{\mathrm{rel}}$, and node-MIQCP bounds $\epsilon_{\min}^{\mathrm{MIQCP}}$, $\epsilon_{\max}^{\mathrm{MIQCP}}$
\State \textbf{Initialize active list:} use $\{\mathcal{X}\}$ or an optional collection of root subboxes
\State \textbf{Initialize incumbent:} $\mathrm{UB}^\star\leftarrow +\infty$
\ForAll{$X_h$ in the active list}
    \State Compute $\mathrm{UB}(X_h)$ by local minimization of the true posterior mean on $X_h$
    \State Update incumbent if $\mathrm{UB}(X_h)<\mathrm{UB}^\star$
    \State Compute nodewise scores $\beta_{i,h}$ and form $\mathcal{I}(h)$ and $\mathcal{U}(h)$
    \State Build the hybrid node MIQCP and solve it globally to obtain $\mathrm{LB}^{\mathrm{PALM}}(X_h)$
\EndFor
\State Prune nodes whose lower bound exceeds the incumbent within tolerance
\While{global gap exceeds $\epsilon_{\mathrm{abs}}$ and $\epsilon_{\mathrm{rel}}$}
    \State Select the active node with the smallest lower bound
    \State Bisect the node along its longest edge
    \For{each child node}
        \State Compute feasible upper bound by local minimization of true posterior mean
        \State Update incumbent if improved
        \State Recompute $\mathcal{I}(h)$ and $\mathcal{U}(h)$
        \State Build and globally solve the hybrid node MIQCP
        \State Prune if the node cannot improve the incumbent
    \EndFor
\EndWhile
\State \textbf{Return:} incumbent solution and certified objective value
\end{algorithmic}
\end{algorithm}

A few implementation remarks are worth recording explicitly.
\begin{enumerate}
    \item \textbf{Node selection.} We use a best-bound strategy, i.e., the active node with the smallest lower bound is processed next (see Line 12).
    \item \textbf{Branching rule.} We bisect the current node along its longest side.
    \item \textbf{Upper bounding.} We use L-BFGS-B to minimize the true posterior mean over each node.
    \item \textbf{Optional preprocessing.} When root pre-partitioning is enabled, each dimension is partitioned proportionally to its side length so that the initial subboxes have comparable diagonal lengths.
    \item \textbf{Adaptive node tolerances.} If a node-local upper bound is already worse than the incumbent, the corresponding MIQCP does not need to be solved to the tightest possible tolerance. In our implementation, the absolute node MIQCP tolerance is clipped according to
    \[
    \epsilon_h^{\mathrm{MIQCP}}
    =
    \max\!\left\{
    \epsilon_{\min}^{\mathrm{MIQCP}},
    \min\!\left[
    \epsilon_{\max}^{\mathrm{MIQCP}},
    \mathrm{UB}(X_h)-\mathrm{UB}^\star
    \right]
    \right\}.
    \]
\end{enumerate}

\subsection{Additional experimental details and solver settings}
\label{app:exp_details}

This subsection records the main experimental settings used in Section~\ref{sec:numerical-experiments}.

\paragraph{Common GP settings.}
Unless otherwise noted, each GP surrogate uses a zero prior mean after response centering and an ARD RBF kernel. Kernel hyperparameters are fitted by maximum likelihood estimation with \texttt{fit\_gpytorch\_mll} routine in BoTorch \cite{balandat2020botorch}. The posterior mean coefficients are then computed as
\[
\alpha=(K_{XX}+\sigma_\epsilon^2 I)^{-1}y,
\]
and the resulting posterior mean is minimized over the prescribed box domain.

\paragraph{Common solver settings.}
All synthetic benchmarks use a time limit of $600$ seconds per instance. The real-world N-benzylation benchmark uses the same $600$ second limit, while the autonomous additive manufacturing benchmark uses a $3600$ second limit. The global stopping tolerances for all solvers are
\[
\epsilon_{\mathrm{abs}}=0.1,
\qquad
\epsilon_{\mathrm{rel}}=0.01.
\]
Node upper bounds are obtained from feasible evaluations of the true posterior mean using L-BFGS-B. Node lower bounds are obtained by solving the non-convex MIQCP subproblems globally with Gurobi 11.0\cite{gurobi}. The lower bound of MIQCP corresponds to the lower bound of each node. In the multi-threaded configuration, we use $16$ threads for the node MIQCP solves. In this work, we use SCIP 1.14.1, BARON 23.6.23, and maingopy 0.6.0. 

\paragraph{Piecewise-linear envelope settings.}
For the squared exponential kernel experiments reported in the main text, we use a fixed number of line segments for each important kernel term on a node. In our implementation, six line segments gave a good balance between lower-bound quality and MIQCP size. For Mat\'ern kernels, the same construction applies, but if the node interval crosses an inflection point then that point is inserted into the breakpoint set before the segment count is allocated over the concave and convex regions.

\paragraph{Optional preprocessing.}
When root pre-partitioning is enabled, the total number of preprocessing nodes is capped in order to avoid spending excessive time in the initialization stage. In our experiments, we limited the number of preprocessing nodes to at most $500$.

\paragraph{Computing resource}
All experiments are conducted on a single server equipped with an Intel Xeon(R) Gold 6444Y processor and 1 TB of RAM. Except for the multi-core configuration used by PALM-Mean, all solvers are executed on a single CPU core.

\bibliography{references}

\end{document}